\documentclass[smallcondensed]{svjour3}                     
\smartqed  
\usepackage{graphicx}
\usepackage{amsmath}
\usepackage{amsfonts}

\begin{document}

\title{Boundary Behavior of Non-Negative Solutions of the Heat Equation in Sub-Riemannian Spaces\thanks{The author was supported in part by NSF
Grant DMS-0701001}
}


\author{Isidro H Munive
}


\institute{Isidro H Munive \at
            Department of Mathematics, Purdue University, 150 North University Street, West Lafayette, Indiana 47907-2067 \\
              Tel.:  (765) 494-1901\\
              Fax: (765) 494-0548\\
              \email{imunive@purdue.edu}           
}


\maketitle

\begin{abstract}
We prove Fatou type theorems for solutions of the heat equation in sub-Riemannian spaces. The doubling property of L-caloric measure, the Dahlberg estimate, the local comparison theorem, among other results, are established here. A backward Harnack inequality is proved for non-negative solutions vanishing in the lateral boundary. 
\keywords{Backward Harnack inequality \and Doubling condition \and Fatou theorems}
\subclass{MSC 35K10 \and MSC 35B05 \and 31B25 }
\end{abstract}

\section{Introduction}
\label{intro}
Fatou type theorems have been an interesting area of study since the appearance in 1968 of Hunt and Wheeden's paper on non-tangential convergence of harmonic functions in Lipschitz domains. In the 1980's Fatou theorems for second order elliptic equations experience a remarkable progress due, mainly, to the works of Caffarelli, et. al. in Lipschitz domains \cite{CFMS}, and Jerison and Kenig paper on NTA-domains \cite{JK}. It was until 1995 with the appearance of \cite{CG1} that Fatou theory was finally extended to sub-elliptic equations in divergence form. In that paper, the authors pointed out that due to the presence of characteristic points the right geometry in the sub-Riemannian case is the one given by $NTA$ domains, which are non-tangentially accessible domains with respect to the Carnot-Cartheodory metric.  
The purpose of this paper is to generalize the results obtained by Capogna and
Garofalo in \cite{CG1} to equations of parabolic type: 
\begin{equation}
L=-\sum_{i=1}^{m}X_{i}^{\ast }X_{i}-\frac{\partial }{\partial t},
\end{equation}
in a domain $D=\Omega \times \left( 0,T\right) $, $\Omega \subset \mathbb{R}%
^{n}$, where $X=\left\{ X_{1},...,X_{m}\right\} $ is $C^{\infty }\left(\mathbb{R}^{n}\right)$ and
satisfies H\"{o}rmander's finite rank condition
\begin{equation}
rank\text{ Lie}\left[ X_{1},...,X_{m}\right] =n
\end{equation}
for every $x\in \mathbb{R}^{n},$ with $\Omega $ an nontangentially accessible domain, $NTA$. Here $X^{\ast }_{i}$ denotes the formal adjoint of $X_{i}$. The main results in this work are the backward Harnack inequality (BHI), the doubling property of $L-caloric$ measure and the local comparison theorem. It is worth mentioning that, as it was first pointed out in the paper by Fabes, Garofalo and Salsa, \cite{FGS}, the boundary backward Harnack inequality and the doubling property are equivalent, hence it is enough to prove one of them. \\

Fatou type theorems in $NTA$ domains are important in the study of free boundary problems. In a recent paper by Danielli et al. \cite{DGP} the regularity of the free boundary was proved in the sub-elliptic case, in which the local comparison theorem played a crucial role. This indicates that it is worth to generalize Fatou theory to the sub-Riemannian setting in $NTA$ domains. \\

In this paper we exploit the relation between the Green function and the $L-caloric$ measure given by the Dahlberg theorem to obtain important estimates. This approach allows to prove results in a clear and elegant way. The organization of the paper is as follows. In section 2 we recall the definition of $NTA$ domain along with known results related with the operator $L$, such as Gaussian bounds for the fundamental solution and the Harnack inequality. In this section we also introduce the notion of the $L-caloric$ measure with the help of  the results in Bony's paper \cite{Bony}. \\

In section 3 we prove several basic estimates for non-negative solutions of $Lu=0$ that will be used throughout the paper. An example of such estimates is the Carleson lemma.  The proof of this estimate relies heavily in the H\"older 
continuity up to the boundary of solutions of $Lu=0$. In \cite{FGS} and \cite{G}, the H\"older continuity implies the fact that the $L-caloric$ measure is bounded away from zero near the boundary. In our present work we can see that the latter implies the former by suitably adapting a beautiful proof found in \cite{SY}.  \\

One of the main results in this paper is the backward Harnack inequality, or (BHI) in short. The (BHI) is crucial to the proof of the doubling condition since it permits to overcome the time gap in the parabolic version of Dahlberg's estimate. In \cite{FS}, Fabes and Safonov gave a quite ingenious proof of the backward Harnack inequality for parabolic equations in divergence form with time dependent coefficients. In section 4 we have been able to adapt their proof to the sub-Riemannian setting.  In that section we also show that an interior elliptic-type Harnack inequality is implied by the Carleson lemma.\\

The local comparison theorem is proved in section 5. Recall that in \cite{JK} the proof of such result was based in a complicated localization theorem due to Peter Jones, see \cite{Jones}. In \cite{CG1} the authors were able to prove the local comparison in the sub-elliptic setting thanks to an ingenious  idea of John Lewis. Their proof does not use Jones' localization theorem. In section 5 we generalize Lewis' idea  to the parabolic setting. \\
  
Finally, in \cite{CG1} the authors dealt with bounded measurable perturbations of sub-Laplacians, namely $\sum_{i=1}^{m}X_{i}^{\ast }\left(a_{ij}X_{i}\right)$, where $A=\left(a_{ij}\right)$ is an $m\times m$ matrix-valued function on $\mathbb{R}^{n}$, having $L^{\infty}$ entries, and satisfying for some $\lambda>0$ and for every $\xi\in\mathbb{R}^{m}$: $\lambda\left|\xi\right|^{2}\leq\sum_{i,j=1}^{m}a_{ij}\xi_{i}\xi{j}\leq\lambda^{-1}\left|\xi\right|^{2}$. So far proving Gaussian bounds and Harnack inequalities remains an open problem   for operators of the type 
\begin{displaymath}
L_{A}=-\sum_{i=1}^{m}X_{i}^{\ast }\left(a_{ij}\left(x,t\right)X_{i}\right)-\frac{\partial}{\partial t}.
\end{displaymath}
Once these results are established for $L_{A}$ the proofs given in this paper will apply without change to this more general setting.
\section{Preliminaries}
\label{sec:2}
Let $\Omega \subset \mathbb{R}^{n}$ be a domain and consider the
Carnot-Caratheodory distance associated to the family of vector fields $X=\left\{X_{1},...,X_{m}\right\}$, $d\left(\cdot,\cdot\right):\mathbb{R}^{n}\times\mathbb{R}^{n}\rightarrow\mathbb{R}^{+}$. In order to apply the results in Kosuoka and Stroock paper \cite{KS} we assume that the vector fields are bounded on $\mathbb{R}^{n}$. The metric balls will be
denoted by $B_{d}\left( x,r\right) =\left\{ y\mid d\left( x,y\right)
<r\right\} $.  In Nagel et al. \cite{NSW} the following result was obtained: there exist constants $C,R_{0}>0$, and a polynomial function with continuous coefficients $\Lambda\left(x,r\right)=\sum_{I}\left|a_{I}\left(x\right)\right|r^{d_{I}}$ such that for every $x\in\Omega$ and $r\leq R_{0}$
\begin{displaymath}
C\leq\frac{\left|B_{d}\left(x,r\right)\right|}{\Lambda\left(x,r\right)}\leq C^{-1}.
\end{displaymath}
The doubling property of the metric balls follows, namely
\begin{equation}
\left|B_{d}\left(x,2r\right)\right|\leq C_{1}\left|B_{d}\left(x,r\right)\right|
\label{doubling}
\end{equation}
for every $x\in\Omega$ and $r\leq R_{0}/2$.
We say that $B_{d}\left( x,r\right) $ is $\left( M,X\right)
-non-tangential$ $ball$ in $\Omega $ if 
\begin{equation*}
\frac{r}{M}<d\left( B_{d}\left( x,r\right) ,\partial \Omega \right) <Mr.
\end{equation*}%
For $x,y\in \Omega $, a $Harnack$ $chain$ from $x$ to $y$ in $\Omega $ is a
sequence of $\left( M,X\right) $-non-tangential balls in $\Omega $, $
B_{1},...,B_{p}$, with $x\in B_{1}$, $y\in B_{p}$, and $B_{i}\cap
B_{i+1}\neq \emptyset $ for $i=1,...,p-1$.\\

Now we can introduce $NTA$ domains.
\begin{definition}
We say that $\Omega$ is an nontangential accessible domain ($NTA$ domain)
if there exists M, $r_{0}>0$ for which:

\begin{enumerate}
\item (Interior corkscrew condition) For any $Q\in\partial\Omega$ and $r\leq
r_{0}$ there exists $A_{r}\left(Q\right)\in\Omega$ such that $\frac{r}{M}
\leq d\left(A_{r}\left(Q\right),Q\right)\leq r$ and $d\left(A_{r}\left(Q
\right),\partial \Omega\right)>\frac{r}{M}$.(This implies that $
B_{d}\left(A_{r}\left(Q\right),\frac{r}{2M}\right)$ is (3M,X)-nontangential.)

\item (Exterior corkscrew condition) $\Omega^{c} = \mathbb{R}
^n\setminus\Omega$ satisfies property (1).

\item (Harnack chain condition) For any $\epsilon>0$ and $x,y\in\Omega$ such
that $d\left(x,\partial \Omega\right)>\epsilon$, $d\left(y,\partial
\Omega\right)>\epsilon$, and $d\left(x,y\right)<2^{k}\epsilon$, there exists
a Harnack chain joining $x$ to $y$ of length $Mk$ and such that the diameter of
each ball is bounded from below by $M^{-1}\min\left\{d\left(x,\partial\Omega
\right),d\left(y,\partial\Omega\right)\right\}$.
\end{enumerate}
\end{definition}

The following important property of $NTA$ domains  will be used in the proof of the local comparison theorem. It was established
in \cite{CG1}.

\begin{proposition}
\label{UseChain}Let $Q\in \Omega .$ For any $x,y$ such that $d\left(
x,\partial \Omega \right) ,d\left( y,\Omega \right) >\epsilon ,$ $x,y\in
\Omega \setminus B\left( Q,\frac{\epsilon }{M}\right) $ and $d\left(
x,y\right) \leq C\epsilon ,$ it is possible to choose a Harnack chain $%
\left\{ B_{i}\right\} _{i=1,...,k},$ joining $x$ to $y$, with the
properties: \ 

(i)\ \ \ The length k of the chain depends only on C;

(ii) \ Q$\notin $B$_{i}$ for $i=1,...,k;$

(iii) $\ \widetilde{M}^{-1}\text{diam}B_{i}\leq d\left( B_{i},\partial
\Omega \cup \left\{ Q\right\} \right) \leq \widetilde{M}\text{diam}B_{i},$
where $\widetilde{M}$ depends only on $M$ and on the doubling constant in (\ref{doubling}).
\end{proposition}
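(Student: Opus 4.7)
The plan is to start from the generic Harnack chain supplied by NTA condition~(3) of the definition and then surgically modify those balls that wander too close to $Q$.

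Since $d(x,\partial\Omega), d(y,\partial\Omega) > \epsilon$ and $d(x,y) \leq C\epsilon < 2^{N}\epsilon$ for $N = \lceil \log_{2} C\rceil + 1$, condition~(3) furnishes an initial chain $\{B'_{j}\}_{j=1}^{k_{0}}$ from $x$ to $y$ of length $k_{0} \leq MN$, each $B'_{j}$ being $(M,X)$-non-tangential in $\Omega$ with $\operatorname{diam} B'_{j} \geq M^{-1}\epsilon$. Since $k_{0}$ depends only on $M$ and $C$, this already yields~(i). Call a ball \emph{good} if $d(B'_{j}, Q) \geq K\,\operatorname{diam} B'_{j}$ for a constant $K = K(M)$ to be fixed, and \emph{bad} otherwise. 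Every good ball immediately satisfies (ii) and (iii) with $\widetilde{M} = \max(M,K)$. Because $x, y \notin B(Q, \epsilon/M)$ and every $B'_{j}$ has diameter at least $M^{-1}\epsilon$, the bad balls form one or more consecutive sub-blocks whose endpoints lie at distance at least $c\,\epsilon/M$ from $Q$ for an absolute $c > 0$.

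Each bad sub-block is then replaced by a detour concentrated at the single scale $\rho = \epsilon/M$. Using the doubling property~\eqref{doubling} of Nagel--Stein--Wainger and the interior corkscrew condition applied at that scale, one can cover the annular slice $\{p : \rho/3 \leq d(p,Q) \leq (K+1)\rho\} \cap \{d(p,\partial\Omega) \geq \rho/(2M)\}$ by a universally bounded number of points $p_{1},\ldots,p_{\ell}$ with $d(p_{s},Q) \geq \rho/2$, $d(p_{s},\partial\Omega) \geq \rho/(2M)$ and $d(p_{s},p_{s+1}) \leq \rho/4$. Applying the NTA chain condition to each consecutive pair produces sub-chains of balls of diameter comparable to $\rho$, each automatically fulfilling (ii) and (iii) with $\widetilde{M}$ enlarged by a universal amount. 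Splicing these sub-chains into the original chain in place of each bad sub-block yields the required chain $\{B_{i}\}_{i=1}^{k}$.

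The main obstacle is the bounded-covering claim for the annulus at scale $\rho$. In Euclidean space one would simply follow a metric sphere around $Q$, but the absence of anisotropic dilations in the general sub-Riemannian setting forces us to extract the bound directly from the polynomial volume growth of $|B_{d}(x,r)|$ and the doubling constant $C_{1}$ of~\eqref{doubling}. Once that covering lemma is in place, the bookkeeping needed to verify (ii) and (iii) for the spliced chain is straightforward.
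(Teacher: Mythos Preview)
The paper does not prove this proposition; it is quoted from \cite{CG1} without argument, so there is no in-paper proof to compare against and I can only comment on the soundness of your sketch.

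The surgery strategy is reasonable, but the detour step has a genuine gap. What you call the ``main obstacle'' is not merely a packing bound: you need the annular slice $\{\rho/3\le d(p,Q)\le (K+1)\rho\}\cap\{d(p,\partial\Omega)\ge \rho/(2M)\}$ to be \emph{chain-connected at scale $\rho$}, so that the covering points can be ordered with $d(p_{s},p_{s+1})\le\rho/4$. Doubling controls the cardinality of a cover, not its connectivity, and the hypotheses put no lower bound on $d(Q,\partial\Omega)$. The existence of a bad ball $B'_{j}$ only forces $d(Q,\partial\Omega)\gtrsim r_{j}/M\ge\epsilon/(2M^{2})$, which is an order of magnitude below your annulus scale $\rho=\epsilon/M$; when $Q$ sits that close to $\partial\Omega$, most of the metric annulus around $Q$ lies within $\rho/(2M)$ of $\partial\Omega$ or outside $\Omega$ altogether, and the slice you wish to traverse can be disconnected, with the entry and exit points of the bad block in different components. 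The interior corkscrew yields one point per scale, not an arc around $Q$. There is also a minor circularity --- the NTA sub-chains joining consecutive $p_{s}$ are not themselves guaranteed to miss $Q$ --- but that is repairable with single balls once the ordering exists. The connectivity issue is not, at least not at the fixed global scale $\rho$ with the tools you invoke. A workable repair is to run the detour at the scale $r_{j}$ of each bad ball separately, using that by non-tangentiality an inflated copy $B_{d}(c_{j},(1+1/M)r_{j})$ lies entirely in $\Omega$, so that distance to $\partial\Omega$ is automatically under control while one routes around $Q$ inside it.
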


Let $D_{T}=\Omega \times \left( 0,T\right) $, with $T>0$. We indicate with $
S_{T}=\partial \Omega \times \left( 0,T\right) $ the lateral boundary of $
D_{T}$, and by $\partial _{p}D_{T}=S_{T}\cup \bar{\Omega}\times \left\{
0\right\} $. For $\left( Q,s\right) \in \partial _{p}D_{T}$ and for $r>0$ we
define 
\begin{equation*}
\Psi _{r}\left( Q,s\right) =\left\{ \left( x,t\right) \in \mathbb{R}
^{n+1}\mid d\left( x,Q\right) <r,\left\vert t-s\right\vert <r^{2}\right\} ,
\end{equation*}
\begin{equation*}
\Psi _{r,Kr}\left( Q,s\right) =\left\{ \left( x,t\right) \in \mathbb{R}
^{n+1}\mid d\left( x,Q\right) <r,\left\vert t-s\right\vert
<K^{2}r^{2}\right\}
\end{equation*}
\begin{equation*}
\Delta _{r}\left( Q,s\right) =\partial _{p}D_{T}\cap \bar{\Psi _{r}}\left(
Q,s\right)
\end{equation*}
\begin{equation*}
\Delta _{r,Kr}\left( Q,s\right) =\partial _{p}D_{T}\cap \bar{\Psi _{r,Kr}}
\left( Q,s\right)
\end{equation*}
\begin{equation*}
\bar{A}_{r}\left( Q,s\right) =\left( A_{r}\left( Q\right) ,s+2r^{2}\right)
,\qquad \underline{A}_{r}\left( Q,s\right) =\left( A_{r}\left( Q\right)
,s-2r^{2}\right) .
\end{equation*}
We call $\Delta _{r}\left( Q,s\right) $ the parabolic surface box of radius 
$r>0$. For $\delta>0$, set 
\begin{equation*}
\Omega ^{\delta }=\left\{ x\in \Omega :\text{dist}\left( x,\partial \Omega
\right) >\delta \right\}
\end{equation*}
\begin{equation*}
D_{T}^{\delta }=\Omega ^{\delta }\times \left( \delta ^{2},T\right)
\end{equation*}

Let $\Gamma\left(x,y\right)=\Gamma\left(y,x\right)$ be the positive fundamental solution of the sub-Laplacian $\sum_{i=1}^{m}X_{i}^{\ast }X_{i}$. The following definition will be needed in this paper.
\begin{definition}
For every $x\in \mathbb{R}^{n}$, and $r>0$, the set 
\begin{displaymath}
B_{X}\left(x,r\right)=\left\{y \in \mathbb{R}^{n}: \Gamma\left(x,y\right)>\frac{r^{2}}{\Lambda\left(x,r\right)}\right\}
\end{displaymath}
will be called the $X$-ball, centered at $x$ with radius $r$.
\end{definition}
The $X$-balls are equivalent to the Carnot-Caratheodory balls : for every $U\subset\mathbb{R}^{n}$, there exists $a>1$, depending on $U$ and $X$, such that 
\begin{equation}
\label{X-balls}
B_{d}\left(x,a^{-1}r\right)\subseteq B_{X}\left(x,r\right)\subseteq B_{d}\left(x,ar\right),
\end{equation} 
for $x\in U$, $0<d\left(x,y\right)\leq R_{o}$, for some $R_{o}$.\\

The following basic estimate was established in \cite{KS}, see also \cite{JSC}.
\begin{theorem}
The fundamental solution $p(x,t;\xi ,\tau )=p(x;\xi ,t-\tau )$ with
singularity at $(\xi ,\tau )$ satisfies the following size estimates: there
exists $M=M\left( X\right) >0$ and for every $k,s\in \mathbb{N}\cup \left\{
0\right\} $, there exists a constant $C=C\left( X,k,s\right) $, such that 
\begin{equation}
\left\vert \frac{\partial {^{k}}}{\partial {t^{k}}}
X_{j_{1}}X_{j_{2}}...X_{j_{s}}p(x,t;\xi ,\tau )\right\vert \leq \frac{C}{
\left( t-\tau \right) ^{s+2k}}\frac{1}{\left\vert B\left( x,\sqrt{t-\tau }
\right) \right\vert }\exp \left( -\frac{Md\left( x,\xi \right) ^{2}}{t-\tau }
\right) ,
\end{equation}
\begin{equation}
p(x,t;\xi ,\tau )=p(x;\xi ,t-\tau )\geq \frac{C^{-1}}{\left\vert B\left( x,
\sqrt{t-\tau }\right) \right\vert }\exp \left( -\frac{M^{-1}d\left( x,\in
\right) ^{2}}{t-\tau }\right) ,
\end{equation}
for every $x,\xi \in \mathbb{R}^{n}$ and any $-\infty <\tau <t<\infty $.
\end{theorem}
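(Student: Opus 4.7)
The plan is to follow the Kusuoka--Stroock strategy for sub-Riemannian heat kernels, supplemented by the analytic refinements of Jerison--Sánchez-Calle. Since $\mathcal{L}_0 = \sum_{i=1}^m X_i^* X_i$ is hypoelliptic by Hörmander's condition, the heat operator $\mathcal{L}_0 + \partial_t$ admits a smooth, symmetric fundamental solution $p$, which may also be realized probabilistically as the transition density of the diffusion generated by $-\mathcal{L}_0$. I would first establish the on-diagonal upper bound $p(x,t;x,0) \leq C/|B_d(x,\sqrt{t})|$ by combining the Nagel--Stein--Wainger volume doubling (\ref{doubling}) with a Nash-type inequality deduced from the subelliptic Sobolev embedding for the Dirichlet form $\mathcal{E}(u) = \int \sum_i |X_i u|^2$.

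To upgrade the on-diagonal bound to the full off-diagonal Gaussian decay, I would apply Davies's perturbation method: conjugating the semigroup by $e^{\pm \varphi}$, where $\varphi$ is bounded with $\sum_i |X_i \varphi|^2 \leq 1$ (that is, $\varphi$ is $1$-Lipschitz with respect to the Carnot--Carathéodory distance), and then optimizing $\varphi$ in terms of $d(x,\xi)$, produces the factor $\exp(-M\, d(x,\xi)^2/(t-\tau))$. The derivative estimates would follow from iterated subelliptic regularity combined with the semigroup identity $p(\cdot,t;\xi) = e^{-(t/2)\mathcal{L}_0}\, p(\cdot,t/2;\xi)$: each horizontal derivative $X_j$ costs a factor $(t-\tau)^{-1/2}$, and each $\partial_t$ costs a factor $(t-\tau)^{-1}$, with the Gaussian factor preserved via another Davies weighted estimate (or, equivalently, through Malliavin integration by parts on the stochastic representation, which is the route actually taken in \cite{KS}).

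The lower bound is the most delicate step. I would bootstrap a small-scale lower bound through a chaining argument: use the semigroup identity to express $p(x,t;\xi,0)$ as an $N$-fold integral of $p(\cdot,t/N;\cdot)$ over intermediate points along a subunit path from $x$ to $\xi$ with $N \sim d(x,\xi)^2/(t-\tau)$ links of diameter $\sim\sqrt{(t-\tau)/N}$. On each link, the short-time/short-distance lower bound, itself a consequence of the diagonal upper bound, symmetry and continuity, contributes a controlled factor, and the product telescopes to the claimed exponential. The main obstacle throughout this program is the non-uniform sub-Riemannian geometry: the Nagel--Stein--Wainger polynomial $\Lambda(x,r)$ has homogeneous dimension depending on $x$, so all constants in Nash's inequality, doubling, and the chaining procedure must be controlled uniformly over the bounded region where the $X_i$'s are truncated. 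This is precisely the difficulty that makes the Malliavin-calculus framework of Kusuoka--Stroock so effective, since it bypasses the need for a fixed ambient scaling and keeps constants uniform as one varies the base point.
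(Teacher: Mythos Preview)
The paper does not supply a proof of this theorem. It is stated in the preliminaries section as a known result, with the sentence ``The following basic estimate was established in \cite{KS}, see also \cite{JSC}'' immediately preceding it, and is then used throughout as a black box. So there is no ``paper's own proof'' to compare against.

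Your outline is a faithful sketch of the strategy that actually underlies the cited references: Nash-type on-diagonal bounds from doubling and a Sobolev/Dirichlet-form inequality, Davies's exponential-weight method for the off-diagonal Gaussian, derivative bounds via semigroup smoothing (with the Malliavin-calculus route of Kusuoka--Stroock as the alternative you correctly flag), and a chaining argument along subunit paths for the lower bound. That is indeed how \cite{KS} and \cite{JSC} proceed, modulo the fact that Kusuoka--Stroock organize the entire argument probabilistically from the outset rather than passing through Davies's perturbation. One small caveat on your derivative bound: the exponent in the statement is $(t-\tau)^{-(s+2k)}$, whereas the heuristic ``each $X_j$ costs $(t-\tau)^{-1/2}$'' would give $(t-\tau)^{-s/2-k}$; the cruder exponent in the paper's statement is of course implied by the sharper one, but you should be aware that what you sketched actually yields a stronger bound than what is written.
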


As it is pointed out in \cite{FSt}, the existence of Gaussian bounds is equivalent to existence of the Harnack inequality. The following theorem states the Harnack inequality as it was derived in \cite{KS} from the Gaussian bounds.
\begin{theorem}
There is an $M>0$ such that for all $x,y\in \mathbb{R}^{n}$, $s<t$ with $
R=d\left( x,y\right) \vee \left( t-s\right) ^{1/2}$ and all $u>0$ with $Lu=0$
in $\left[ s,s+R^{2}\right] \times B_{d}\left( x,R\right) $: 
\begin{equation}
u\left( y,s\right) \leq u\left( x,t\right) \exp \left( M\left( 1+\frac{
d\left( x,y\right) ^{2}}{t-s}\right) \right)
\end{equation}
\label{Harnack1}
\end{theorem}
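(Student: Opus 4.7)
The plan is to derive the Harnack inequality as a consequence of the two-sided Gaussian bounds of the preceding theorem, following the Fabes--Stroock philosophy that Gaussian bounds and Harnack are equivalent. I would split the argument into (i) a \emph{near-diagonal} Harnack in the regime $d(x,y)^{2}\lesssim t-s$, and (ii) an \emph{iteration} step to produce the exponential factor in the general regime.

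For the near-diagonal step, fix a midpoint time $\tau\in(s,t)$ with $\tau-s\sim t-\tau\sim (t-s)/2$. Using a cutoff function $\varphi$ supported in $B_{d}(x,R)$ that equals one on a slightly smaller ball, and Duhamel's formula, I would represent
\[
u(z,t)=\int p(z,t;\xi,\tau)\,\varphi(\xi)u(\xi,\tau)\,d\xi+\text{(commutator error)},
\]
and similarly for $u(y,s)$ with $s<\tau$ replaced by a time slightly below $\tau$ in the backwards sense (the parabolic adjoint being handled symmetrically because the fundamental solution is jointly defined). Comparing the two formulas on a common integration region centered roughly at the midpoint of $x$ and $y$, the Gaussian upper bound on $p(y,s;\cdot,\tau)$ and lower bound on $p(x,t;\cdot,\tau)$ combine to a constant ratio whenever $d(x,y)^{2}\lesssim t-s$, because in that regime the two exponential factors $\exp(-M d(\cdot,x)^{2}/(t-\tau))$ and $\exp(-M d(\cdot,y)^{2}/(\tau-s))$ are comparable on the common domain. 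The commutator errors involve spatial derivatives of $p$ supported away from the cylinder center, where the Gaussian decay (again from the preceding theorem, with $s=1,2$ in the derivative estimate) makes them harmless and absorbable after iteration.

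For the iteration step, set $N=\lceil 1+d(x,y)^{2}/(t-s)\rceil$ and partition the time interval into $N$ equal sub-intervals of length $(t-s)/N$. Using the NTA geometry and the Harnack chain condition (or just the metric geometry of $(\mathbb{R}^{n},d)$ on a fixed compact set), choose intermediate points $y=z_{0},z_{1},\dots,z_{N}=x$ with $d(z_{i},z_{i+1})^{2}\lesssim (t-s)/N$, and apply the near-diagonal estimate at each step $(z_{i},s+i(t-s)/N)\to(z_{i+1},s+(i+1)(t-s)/N)$. Each step introduces a fixed multiplicative constant $C_{0}$, so
\[
u(y,s)\leq C_{0}^{N}u(x,t)=\exp(N\log C_{0})\,u(x,t)\leq\exp\bigl(M\bigl(1+d(x,y)^{2}/(t-s)\bigr)\bigr)u(x,t),
\]
which is precisely the claimed bound.

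The main obstacle is the near-diagonal step, and specifically controlling the cutoff/commutator error when localizing the representation formula to the cylinder $[s,s+R^{2}]\times B_{d}(x,R)$ on which $u$ is defined. In the Euclidean case scaling trivializes this; in the sub-Riemannian setting I would use the equivalence (\ref{X-balls}) of $X$-balls with $d$-balls and the doubling property (\ref{doubling}) together with the derivative Gaussian bounds to absorb those error terms, ensuring the geometric constants depend only on $X$ and not on the base point.
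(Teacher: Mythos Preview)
The paper does not prove this theorem; it is quoted from Kusuoka--Stroock \cite{KS}, with the remark (via \cite{FSt}) that Gaussian bounds and Harnack are equivalent. Your two-step outline---near-diagonal Harnack from the Gaussian bounds, then chaining---is indeed the route taken there, and your iteration step is correct. (You do not need NTA geometry or Harnack chains for it: the statement lives in a metric ball in $\mathbb{R}^{n}$, and points along a $d$-geodesic already serve as the $z_{i}$.)

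The near-diagonal step as written, however, has a genuine gap. You propose to compare a forward representation of $u(x,t)$ from time $\tau\in(s,t)$ with a ``backwards'' representation of $u(y,s)$ from the same time slice, invoking ``the parabolic adjoint being handled symmetrically because the fundamental solution is jointly defined.'' But the duality $p(y,s;\xi,\tau)=p^{\ast}(\xi,\tau;y,s)$ yields representation formulas for solutions of $L^{\ast}$, not of $L$. Since $u$ solves $Lu=0$ and not $L^{\ast}u=0$, there is no representation of $u(y,s)$ in terms of $u(\cdot,\tau)$ with $\tau>s$; the kernel $p(y,s;\xi,\tau)$ simply vanishes for $\tau>s$, and the ratio comparison you sketch cannot be set up this way.

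The correct near-diagonal argument stays entirely on the forward side. One option (essentially what is in \cite{KS}) is to compare the Dirichlet heat kernel $p^{Q}$ of the cylinder with the global $p$: the upper Gaussian bound controls the boundary loss and gives $p^{Q}\ge c\,p$ on an interior sub-cylinder, after which the single forward inequality $u(x,t)\ge\int p^{Q}(x,t;\xi,\sigma)\,u(\xi,\sigma)\,d\xi$, together with the lower Gaussian bound and a mean-value inequality for subsolutions (itself a consequence of the upper Gaussian bound), closes the estimate. Your cutoff/commutator localization can be made to work for the forward representation of $u(x,t)$ after integrating by parts to move $X_{i}$ onto $p$, but it does not supply the missing upper bound on $u(y,s)$.
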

We will need the following strong maximum principle to develop Perron's method. 
\begin{theorem}
Let $D=\Omega\times\left(0,T\right)$ where $\Omega\in\mathbb{R}^{n}$ is a connected open set and $T>0$. Suppose that $Lu\geq0$ in $D$. Set $M=\sup_{\bar{D}}u$. Then either $u\left(x,t\right)<M$ for any $\left(x,t\right)\in \bar{D}\setminus \partial_{p} D$ or if $u\left(x_{0},t_{0}\right)= M$, then $u\equiv M$ in $\bar{D}_{t_{0}}$.
\end{theorem}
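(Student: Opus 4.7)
The plan is to reduce the statement to a backward propagation lemma for non-negative supersolutions of $Lv=0$, and then iterate it along Harnack chains inside $\Omega$. Put $v:=M-u$; since $L$ annihilates constants and $Lu\ge 0$, we have $v\ge 0$ on $\bar D$ and $Lv\le 0$ in $D$. The hypothesis becomes $v(x_0,t_0)=0$ with $(x_0,t_0)\in\bar D\setminus\partial_p D$, and the conclusion amounts to $v\equiv 0$ on $\bar D_{t_0}$.

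The analytic core is the following backward propagation lemma: if $v$ is a non-negative supersolution of $Lv=0$ in a parabolic cylinder $Q=B_d(y,r)\times(\tau-r^2,\tau)$ compactly contained in $D$ and $v(y,\tau)=0$, then $v\equiv 0$ on a smaller past cylinder $B_d(y,r/2)\times[\tau-r^2/4,\tau]$. To prove it, I would solve the Dirichlet problem in $Q$ with boundary data $v|_{\partial_p Q}$, feasible thanks to Bony's construction recalled in Section~\ref{sec:2}, obtaining an $L$-caloric $w$ on $Q$. The classical weak maximum principle, established independently by the standard first-order perturbation argument $u_\varepsilon=u-\varepsilon t$ that exploits the sign structure of $L$, applied to the non-negative supersolution $v-w$ with vanishing parabolic-boundary data, gives $0\le w\le v$ in $Q$; in particular $w(y,\tau)=0$. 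Applying Theorem~\ref{Harnack1} to $w+\delta$ and letting $\delta\to 0^+$ then forces $w\equiv 0$ on the smaller subcylinder, hence $v\equiv 0$ there.

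With the lemma in hand, the conclusion follows by a Harnack-chain iteration. A first application at $(x_0,t_0)$ yields $v\equiv 0$ on a small past cylinder of $(x_0,t_0)$. For arbitrary $(x_1,t_1)$ with $t_1<t_0$ and $x_1\in\Omega$, the Harnack-chain condition for $\Omega$, used in the interior in the spirit of Proposition~\ref{UseChain}, supplies a finite chain of metric balls connecting $x_1$ to $x_0$ in $\Omega$ with radii $r_i$ chosen so that $\sum r_i^2<t_0-t_1$; iterating the lemma transfers the vanishing of $v$ sideways in space by $r_i$ and backwards in time by $r_i^2$ at each step, ultimately yielding $v(x_1,t_1)=0$. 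Continuity then extends the vanishing to $\bar D_{t_0}$, giving $u\equiv M$ there.

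The main obstacle is the extension of Theorem~\ref{Harnack1} from solutions to the supersolution $v=M-u$, encapsulated in the propagation lemma; the passage through the Dirichlet problem and the weak maximum principle is what makes this step non-trivial. Once Dirichlet solvability (from Section~\ref{sec:2}) and the weak maximum principle, proved directly and independently of the present statement, are in hand, the rest is a routine topological iteration carried along Harnack chains.
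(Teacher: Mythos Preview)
The paper does not prove this from scratch: it observes that $L$ can be written as $\sum_i Z_i^2 + Y + a$ with $\{Z_1,\dots,Z_m,Y\}$ satisfying H\"ormander's condition (using $X_i^\ast = X_i + b_i$), invokes Bony's Theorem~3.2 directly, and notes that the drift $-\partial_t$ propagates the maximum to all earlier times. Your route is entirely different and runs into two real problems.

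The central gap is in your propagation lemma. From $0\le w\le v$ and $w\equiv 0$ on the smaller subcylinder you conclude ``hence $v\equiv 0$ there''; this does not follow, since $w\le v$ gives no upper bound on $v$. What Harnack actually delivers (pushed throughout $Q$) is $w\equiv 0$ in all of $Q$, whence $\int_{\partial_p Q} v\,d\omega_Q^{(x,t)}=0$ for every $(x,t)\in Q$, i.e.\ $v=0$ on $\partial_p Q$ at regular points --- not in the interior. To reach interior points you would have to run the argument for the whole family $Q_\rho=B_d(y,\rho)\times(\tau-\rho^2,\tau)$, $0<\rho<r$, whose parabolic boundaries sweep out $Q$, and you would need the caloric measures on $\partial_p Q_\rho$ to have full support, which is not automatic for sub-Riemannian metric cylinders. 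Secondly, your construction of $w$ relies on the Perron--Wiener--Brelot solvability ``recalled in Section~\ref{sec:2}'', but in the paper (and in \cite{Bony}) that result is stated \emph{after} the present theorem, with the explicit remark that the strong maximum principle is needed to develop Perron's method; as written, the argument is circular. A minor further point: $\Omega$ here is only a connected open set, not an $NTA$ domain, so Proposition~\ref{UseChain} and the Harnack chain condition are unavailable; the chain you need should come instead from covering a compact arc in $\Omega$ by small $d$-balls, which requires only connectedness.
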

Only the sketch of the proof will be provided. In his 1969 paper (\cite{Bony}), Bony considered operators of the following form
\begin{displaymath}
\sum^{m}_{i=1}Z^{2}_{i}+Y+a
\end{displaymath}
where the Lie algebra generated by $Z_{1},...,Z_{m}$ and $Y$ generates the whole tangent space. Since $X^{*}_{i}=X_{i}+b_{i}$, for some $b_{i}$,we can write our operator $L$ in the above form. Hence, we can apply Theorem 3.2 in (\cite{Bony}) to our operator $L$. Now, if $u\left(x_{0},t_{0}\right)= M$ the operator $-\partial/\partial t$ forces the maximum to propagate to times $t\leq t_{0}$. This means $u\equiv M$ in $\bar{D}_{t_{0}}$.\\

The results in (\cite{Bony}) also imply that any $\varphi\in C\left(\partial_{p} D_{T},\mathbb{R}\right)$ is resolutive. Then there exists the Perron-Wiener-Brelot solution to the Dirichlet problem
\begin{equation}
Lu=0\text{ in }D_{T},\text{ }u=\varphi \text{ on }\partial _{p}D_{T}
\label{DP}
\end{equation}

\begin{theorem}
Let $\Omega$ be a connected, bounded open set, and $\varphi\in C\left(\partial_{p} D\right)$. Then there exists a unique caloric function $H^{D}_{\varphi}$ which solves (\ref{DP}) in the sense of Perron-Wiener-Brelot. Moreover, $H^{D}_{\varphi}$ satisfies
\begin{equation}
\sup_{D} \left|H^{D}_{\varphi}\right|\leq \sup_{\partial_{p} D} \left|\varphi\right|
\label{PWB}
\end{equation}
\end{theorem}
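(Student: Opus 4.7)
The plan is to carry out the Perron-Wiener-Brelot (PWB) construction for the operator $L$, leaning on the resolutivity statement of Bony already quoted in the excerpt and on the Harnack inequality (Theorem \ref{Harnack1}) for the caloricity of the envelope.

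First I would introduce the Perron classes. Call $v$ an upper function for $\varphi$ if $v$ is $L$-supercaloric on $D$ and $\liminf_{(x,t)\to(Q,s)}v(x,t)\geq\varphi(Q,s)$ for every $(Q,s)\in\partial_{p}D$; lower functions are defined dually. Denoting these families by $\mathcal{U}_{\varphi}$ and $\mathcal{L}_{\varphi}$, set
$$\overline{H}^{D}_{\varphi}(x,t)=\inf\{v(x,t):v\in\mathcal{U}_{\varphi}\},\qquad \underline{H}^{D}_{\varphi}(x,t)=\sup\{v(x,t):v\in\mathcal{L}_{\varphi}\}.$$
Since the constants $\pm\sup_{\partial_{p}D}|\varphi|$ lie in the respective classes, both families are non-empty, and the strong maximum principle already recorded in the excerpt immediately gives
$$-\sup_{\partial_{p}D}|\varphi|\;\leq\;\underline{H}^{D}_{\varphi}\;\leq\;\overline{H}^{D}_{\varphi}\;\leq\;\sup_{\partial_{p}D}|\varphi|.$$
Consequently the bound (\ref{PWB}) will hold as soon as I can identify $\underline{H}^{D}_{\varphi}$ with $\overline{H}^{D}_{\varphi}$. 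That identification is exactly the resolutivity of $\varphi$, which the excerpt attributes to Bony \cite{Bony}; I therefore define $H^{D}_{\varphi}$ to be this common value and obtain the candidate PWB solution.

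The next step is to show that $H^{D}_{\varphi}$ is genuinely $L$-caloric inside $D$. I would run the standard Bauer-Brelot modification argument on a basis of $L$-regular parabolic cylinders: around any interior point $(x_{0},t_{0})\in D$, pick a small $\Psi_{r}(x_{0},t_{0})$ on which, by Bony's local solvability, the Dirichlet problem admits a unique $L$-caloric solution for any continuous datum on the parabolic boundary. Replacing $v\in\mathcal{U}_{\varphi}$ by the caloric extension of $v|_{\partial_{p}\Psi_{r}}$ inside $\Psi_{r}$ produces a smaller upper function; taking a decreasing sequence of such modifications converging to $H^{D}_{\varphi}$ at a prescribed point and invoking the Harnack inequality of Theorem \ref{Harnack1} to upgrade this pointwise convergence to locally uniform convergence on $\Psi_{r}$, the limit is $L$-caloric and agrees with $H^{D}_{\varphi}$ there. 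Since this can be done around every interior point, $LH^{D}_{\varphi}=0$ in $D$.

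Uniqueness within the PWB framework is automatic from the definition as infimum/supremum of the barrier classes, and any two $L$-caloric functions sharing boundary data $\varphi$ at every $\partial_{p}D$-point must agree by the strong maximum principle stated just before the theorem. I do not expect a genuine obstacle anywhere in this plan: the sup bound and uniqueness are one-line consequences of the setup, and the only place where real work is hidden is the local solvability of the Dirichlet problem on a basis of regular sub-Riemannian parabolic cylinders — precisely Bony's theorem invoked in the paragraph preceding the statement.
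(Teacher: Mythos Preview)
The paper does not actually supply a proof of this theorem: it is stated in the preliminaries immediately after the sentence ``The results in \cite{Bony} also imply that any $\varphi\in C(\partial_{p}D_{T},\mathbb{R})$ is resolutive,'' and no \texttt{proof} environment follows. In other words, the author treats existence, uniqueness, and the sup bound as a black-box consequence of Bony's axiomatic potential theory for hypoelliptic operators.

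Your proposal therefore does more than the paper: you unpack the PWB machinery explicitly --- non-emptiness of the Perron classes via constant barriers, the envelope inequalities from the strong maximum principle, resolutivity from Bony to collapse $\underline{H}^{D}_{\varphi}$ and $\overline{H}^{D}_{\varphi}$, and the Bauer--Brelot local replacement argument on a basis of regular cylinders (combined with Harnack) to establish interior $L$-caloricity of the envelope. This is the standard route and is correct; the one place where you lean on outside input is the existence of a basis of $L$-regular sets with solvable Dirichlet problem, and you rightly point to Bony for that. What your write-up buys over the paper's bare citation is a transparent derivation of the bound (\ref{PWB}) --- it falls out of the constant barriers before any hard analysis --- whereas the paper simply asserts it. There is no genuine gap in your plan.
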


The previous theorem allows to define the $L-caloric$ measure $\mathrm{d}\omega^{\left(x,t\right)}$ for $D$ evaluated at $\left(x,t\right)\in D$ as the unique probability measure on $\partial_{p} D$ such that for every $\varphi\in C\left(\partial_{p} D\right)$
\begin{displaymath}
H^{D}_{\varphi}\left(x,t\right)=\int_{\partial_{p} D}{\varphi\left(y,s\right)\mathrm{d}\omega^{\left(x,t\right)}\left(y,s\right)}
\end{displaymath}
Another result in Bony's paper tells that a boundary function $\varphi$ is $L$-resolutive if and only if $\varphi\in L^{1}\left(\partial_{p} D_{T},\mathrm{d}\omega^{\left(x,t\right)}\right)$. 

\section{Estimates for $L-caloric$ measure and solutions of $Lu=0$ in $NTA$ domains}

The purpose of this section is to established several basic estimates for $L-caloric$ measure and solutions of $Lu=0$. For instance, in this section one can find the H\"older continuity of solutions near the boundary, Carleson estimate and the Dahlberg's theorem, among others. 
\begin{lemma}
\label{NonVa}
Let $\left( Q,s\right) \in \partial _{p}D_{T}$ and $r>0$
sufficiently small, depending on $r_{0}$. Then, there exists a constant $C>0$%
, depending on $\mathcal{L}$, $M$, and $r_{0}$ such that 
\begin{equation}
\inf_{\Psi _{r}\left( Q,s\right) \cap D_{T}}\omega ^{\left( x,t\right)
}\left( \Delta _{2r}\left( Q,s\right) \right) \geq C.
\end{equation}
\end{lemma}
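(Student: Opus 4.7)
My strategy is to construct an $L$-caloric barrier $w$ with singularity in the exterior corkscrew of $\Omega$, arranged so that $w$ is bounded below on $\Psi_r(Q,s)\cap D_T$ yet essentially vanishes on $\partial_p D_T\setminus\Delta_{2r}(Q,s)$; then the maximum principle directly yields the desired lower bound on $u(x,t):=\omega^{(x,t)}(\Delta_{2r}(Q,s))$, without any appeal to the Harnack inequality or to H\"older continuity at the boundary (neither of which is available at this stage of the paper in a useful quantitative form, since the Hölder estimate is in fact derived from the very lemma we are proving).

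\textbf{Barrier construction.} Invoking the NTA exterior corkscrew condition, pick $A^{*}_r(Q)\in\Omega^c$ with $B^{*}_r:=B_d(A^{*}_r(Q),r/M)\subset\Omega^c$ and $d(A^{*}_r(Q),Q)\leq r$ (valid for $r$ sufficiently small in terms of $r_0$). Define
\[
w(x,t):=\int_{B^{*}_r}p(x,t;y,s-4r^2)\,dy,\qquad t>s-4r^2.
\]
Then $w$ is $L$-caloric on $\mathbb{R}^n\times(s-4r^2,\infty)$, satisfies $0\leq w\leq 1$, and vanishes identically on $\Omega\times\{s-4r^2\}$ because $B^{*}_r$ lies in $\Omega^c$. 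For any $(x,t)\in\Psi_r(Q,s)\cap D_T$ one has $d(x,y)\leq 3r$ for every $y\in B^{*}_r$ and $t-(s-4r^2)\in(3r^2,5r^2)$; combining the Gaussian lower bound on $p$ with the doubling property, which yields $|B_d(x,\sqrt{t-s+4r^2})|\sim|B^{*}_r|$ since the centers $x$ and $A^{*}_r(Q)$ lie at Carnot--Carath\'eodory distance $\lesssim r$ apart, one obtains $w\geq c_1$ on $\Psi_r(Q,s)\cap D_T$ for a geometric constant $c_1>0$ depending only on $\mathcal{L},M,r_0$ and the doubling constant.

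\textbf{Comparison and the main obstacle.} I would compare $u$ and $w$ on $\tilde D:=(\Omega\times(s-4r^2,s+4r^2))\cap D_T$ by Bony's strong maximum principle. On $\partial_p\tilde D$: one has $u\geq 0=w$ on the bottom $\Omega\times\{s-4r^2\}$, and $u=1\geq w$ on $\Delta_{2r}(Q,s)\cap\partial_p\tilde D$. The delicate region is the ``far'' lateral portion $F:=\{(P,\sigma):P\in\partial\Omega,\,d(P,Q)>2r,\,\sigma\in[s-4r^2,s+4r^2]\}$, where $u=0$ while $w$ need not vanish, and this is what I expect to be the main technical obstacle of the proof. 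Since $d(P,y)\geq r/2$ for every $y\in B^{*}_r$ whenever $d(P,Q)>2r$ (after, if necessary, replacing $B^{*}_r$ by a concentric sub-ball so that $M$ is effectively large), the Gaussian upper bound yields $w(P,\sigma)\leq C_{*}$ on $F$ for a constant $C_{*}$ depending only on $\mathcal{L}$, $M$, and the doubling constant. Arranging that $C_{*}<c_1$ is the core difficulty --- one may proceed by shifting the source time from $s-4r^2$ to some $s-kr^2$ with $k$ tuned to sharpen the separation between the Gaussian lower bound on $\Psi_r$ and the upper bound on $F$, or by a dyadic decomposition of $F$ into annuli along which the Gaussian tails can be summed. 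Once $C_{*}<c_1$ is secured, the strong maximum principle applied to the non-negative $L$-caloric function $u-(w-C_{*})$ on $\tilde D$ (which is non-negative on $\partial_p\tilde D$ by the three checks above) yields $u\geq w-C_{*}\geq c_1-C_{*}>0$ uniformly on $\Psi_r(Q,s)\cap D_T$. The endpoint cases $s\leq 4r^2$ or $T-s\leq 4r^2$ reduce to the above by truncation in time, the contribution of the initial face $\Omega\times\{0\}$ only being beneficial to $\omega^{(\cdot)}(\Delta_{2r}(Q,s))$ when $s$ is small.
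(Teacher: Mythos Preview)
Your barrier strategy is natural, and you correctly single out the far lateral portion $F$ as the crux. Unfortunately the gap you flag is real and the fixes you suggest do not close it. The only spatial separation between the source ball $B^{*}_r$ and the set $F$ is of order $r$: for $P\in\partial\Omega$ with $d(P,Q)$ barely above $2r$ one has $d(P,y)\gtrsim r(1-1/M)$, while for $(x,t)\in\Psi_r$ one has $d(x,y)\lesssim 2r$. Since the relevant time gaps are also $\sim r^{2}$ in both regimes, the two-sided Gaussian bounds produce $c_1$ and $C_{*}$ of the same order, with implicit constants fixed by the structural constant $M$ of Theorem~2.2; there is no free parameter left to force $C_{*}<c_{1}$. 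Shifting the source time to $s-kr^{2}$ does not help: the time gap on $F$ then becomes $\sim kr^{2}$ as well, the exponential factors in both $c_{1}$ and $C_{*}$ tend to $1$ together, and the volume prefactors scale identically. A dyadic decomposition of $F$ only controls the tail $d(P,Q)\gg r$; the near portion $d(P,Q)\in(2r,3r)$ remains uncontrolled.

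The paper avoids the obstacle by a different comparison and by using the interior Harnack inequality, which, contrary to your concern, \emph{is} legitimately available here (it is Theorem~\ref{Harnack1}, derived from the Gaussian bounds, and does not depend on the present lemma; only the boundary H\"older estimate, Corollary~\ref{HolBou}, is downstream). Concretely, the paper compares $u=\omega^{(\cdot)}(\Delta_{2r})$ on $\Psi_{2r}\cap D_T$ with $v=\omega_{\Psi_{2r}}^{(\cdot)}(\Delta'_{\mu r})$, the caloric measure of the \emph{full} cylinder $\Psi_{2r}$ evaluated on a small bottom disk lying in $\Omega^{c}$ (furnished by the exterior corkscrew). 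Because $v$ is the caloric measure of $\Psi_{2r}$, it vanishes on $\partial_{p}\Psi_{2r}\cap D_T$ by construction, so the comparison $u\geq v$ on $\partial_{p}(\Psi_{2r}\cap D_T)$ goes through with no far lateral leakage. The remaining task, bounding $\inf_{\Psi_r\cap D_T}v$ from below, is then handled by Harnack applied to $v$ on all of $\Psi_{2r}$ (where $v$ is caloric), reducing matters to a one-point evaluation near the exterior corkscrew center. If you insist on a heat-kernel barrier, you would need the Dirichlet heat kernel of $\Psi_{2r}$ rather than the global $p$; but a quantitative lower bound for that kernel again amounts to invoking Harnack.
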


\begin{proof}
We are going to give the proof for the case $s>0$, the case $s=0$ is treated
similar. By the exterior corkscrew condition we can find a $
\mu=\mu\left(M\right)>0$, such that 
\begin{equation*}
\Psi^{\prime }=B_{d}\left(\bar{Q},\mu r\right)\times
\left(s-4r^2,s+4r^{2}\right)\subset \Psi_{2r}\left(Q,s\right)\setminus D_{T}.
\end{equation*}
for some $\bar{Q}\in\Omega^{c}$. Consider the bottom of this cylinder,
namely $\Delta^{\prime }_{\mu r}=B_{d}\left(\bar{Q},\mu r\right)\times
\left\{s-4r^{2}\right\}$. Recall that $\omega^{\left(x,t\right)}\left(
\Delta_{2r}\right)$ is $1$ on $\Delta_{2r}$ and nonnegative in the rest of $
\bar{\Psi}_{2r}$. On the other hand, if $v\left(x,t\right)=\omega^{\left(x,t
\right)}_{\Psi_{2r}}\left(\Delta^{\prime }_{\mu r}\right)$, where $
\omega^{\left(x,t\right)}_{\Psi_{2r}}$ denotes the caloric measure of $
\Psi_{2r}$, we have $v=0$ in $\partial_{p} \Psi_{2r}\cap D_{T}$ and $v\leq 1$
on $\Delta_{2r}$. By the comparison principle, we end up with 
\begin{equation}
\omega^{\left(x,t\right)}\left(\Delta_{2r}\right)\geq v\left(x,t\right)
\qquad \text{in} \qquad \Psi_{2r}\left(Q,s\right)\cap D_{T},
\end{equation}
which implies that 
\begin{equation*}
\inf_{\Psi_{r}\left(Q,s\right)\cap D_{T}}
\omega^{\left(x,t\right)}\left(\Delta_{2r}\left(Q,s\right)\right)\geq
\inf_{\Psi_{r}\left(Q,s\right)\cap D_{T}} v.
\end{equation*}
Using the maximum principle once more we obtain 
\begin{equation*}
v\left(x,t\right)\geq v^{\prime
}\left(x,t\right)=\omega^{\left(x,t\right)}_{\Psi^{\prime
}_{2r}}\left(\Delta^{\prime }_{\mu r}\right)\qquad \text{in} \qquad
\Psi^{\prime }_{2r}.
\end{equation*}
Now, we can apply the Harnack inequality to $v$ inside $\Psi_{2r}\left(Q,s
\right)$ to obtain 
\begin{equation*}
\inf_{\Psi_{r}\left(Q,s\right)\cap D_{T}} v\geq Cv\left(\bar{Q}
,s-2r^{2}\right)\geq Cv^{\prime }\left(\bar{Q},s-2r^{2}\right),
\end{equation*}
for some constant $C>0$. In order to finish the proof we extend the
function $v^{\prime }$ to a larger cylinder. Consider the cylinder 
\begin{equation*}
\Psi^{\prime \prime }=B_{d}\left(\bar{Q},\mu r\right)\times
\left(s-5r^{2},s+4r^{2}\right).
\end{equation*}
Extend $v^{\prime }$ by the formula 
\begin{equation*}
v^{\prime }\left(x,t\right)=\omega^{\left(x,t\right)}_{\Psi^{\prime \prime
}}\left(\partial_{p} \Psi^{\prime \prime }\cap \left\{t\leq
s-4r^{2}\right\}\right),
\end{equation*}
hence $v^{\prime }\equiv 1$ on $\Psi^{\prime \prime }\cap \left\{t\leq
s-4r^{2}\right\}$. Using the Harnack inequality in $\Psi^{\prime \prime }$ ,
we finally obtain 
\begin{equation*}
v^{\prime }\left(\bar{Q},s-2r^{2}\right)\geq Cv^{\prime }\left(\bar{Q}
,s-4r^{2}\right)=C.
\end{equation*}
\end{proof}
\qed

As a corollary we obtain the Holder continuity at the boundary.

\begin{corollary}
\label{HolBou}Under the assumptions of the previous lemma, let u be a
nonnegative solution of $\mathcal{L}u=0$ which continuously vanishes on $
\Delta _{2r}\left( Q,s\right) $. Then 
\begin{equation}
\sup_{\Psi _{r}\left( Q,s\right) }u\leq \theta \sup_{\Psi _{2r}\left(
Q,s\right) }u,
\end{equation}%
for some constant $\theta \in \left( 0,1\right) $ depending on $L$
, $M$, and $r_{0}$.
\end{corollary}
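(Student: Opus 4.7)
The plan is to prove this oscillation-at-the-boundary statement by a standard barrier argument, leveraging the non-degeneracy estimate from Lemma \ref{NonVa}. Write $M=\sup_{\Psi_{2r}(Q,s)}u$. The goal is to produce a constant $\theta\in(0,1)$, depending only on the structural data, such that $u\leq\theta M$ on $\Psi_{r}(Q,s)\cap\bar{D}_{T}$; passing to the supremum on the left then yields the corollary. The mechanism is a lower barrier $v$ that is $L$-caloric on $\Psi_{2r}$ and bounded below on $\Psi_{r}(Q,s)\cap D_{T}$.

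The barrier I would use is the one already constructed inside the proof of Lemma \ref{NonVa}. Using the exterior corkscrew one plants a cylinder $\Psi'=B_{d}(\bar{Q},\mu r)\times(s-4r^{2},s+4r^{2})\subset \Psi_{2r}\setminus D_{T}$ with $\bar{Q}\in\Omega^{c}$, and defines $v(x,t)=\omega^{(x,t)}_{\Psi_{2r}}(\Delta'_{\mu r})$, where $\Delta'_{\mu r}=B_{d}(\bar{Q},\mu r)\times\{s-4r^{2}\}$. Then $v$ is $L$-caloric in $\Psi_{2r}$, takes the value $1$ on $\Delta'_{\mu r}$, and vanishes on the rest of $\partial_{p}\Psi_{2r}$; in particular $v\equiv 0$ on $\partial_{p}\Psi_{2r}\cap\bar{D}_{T}$, since $\Delta'_{\mu r}$ lies strictly in $\Omega^{c}\times\mathbb{R}$. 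The Harnack-chain argument in the second half of the proof of Lemma \ref{NonVa} already establishes $v(x,t)\geq C_{0}$ on $\Psi_{r}(Q,s)\cap D_{T}$ for a positive constant $C_{0}$ depending only on $L$, $M$, and $r_{0}$.

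The corollary then follows by applying the maximum principle to the $L$-caloric function $w=u+Mv-M$ on $\Psi_{2r}(Q,s)\cap D_{T}$. On the portion of the parabolic boundary contained in $\Delta_{2r}(Q,s)$ the hypothesis $u\equiv 0$ and the bound $v\leq 1$ give $w\leq 0$; on the portion contained in $\partial_{p}\Psi_{2r}\cap\bar{D}_{T}$ the barrier $v$ vanishes while $u\leq M$ by the definition of $M$, so again $w\leq 0$. Hence $u\leq M(1-v)$ throughout $\Psi_{2r}\cap D_{T}$, and the lower bound $v\geq C_{0}$ on $\Psi_{r}\cap D_{T}$ gives $u\leq (1-C_{0})M$ there. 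Taking $\theta=1-C_{0}$ concludes.

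The only real subtlety, and the step I would verify most carefully, is the boundary bookkeeping for the maximum principle on the non-cylindrical domain $\Psi_{2r}\cap D_{T}$. Its parabolic boundary must split cleanly into the piece $\Delta_{2r}$ where the hypothesis supplies $u=0$ and the piece $\partial_{p}\Psi_{2r}\cap\bar{D}_{T}$ where the exterior cylinder forces $v=0$; together these cover the bottom and lateral portions and allow the maximum principle to be invoked without any extra assumption. Beyond this check, the argument is a one-line consequence of Lemma \ref{NonVa} and the strong maximum principle already recorded.
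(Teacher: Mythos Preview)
Your argument is correct and is essentially the paper's proof, just repackaged: the paper writes $u(x,t)=\int_{\partial_p(\Psi_{2r}\cap D_T)} u\,d\bar\omega^{(x,t)}$ for the caloric measure $\bar\omega$ of $\Psi_{2r}\cap D_T$, drops the $\Delta_{2r}$ piece where $u=0$, and bounds the rest by $(1-\bar\omega^{(x,t)}(\Delta_{2r}))\sup_{\Psi_{2r}}u$, then invokes Lemma~\ref{NonVa} to get $\bar\omega^{(x,t)}(\Delta_{2r})\geq C$ on $\Psi_r\cap D_T$. Your direct maximum-principle comparison with the barrier $v$ from the proof of Lemma~\ref{NonVa} is exactly the same mechanism with the Poisson representation unpacked; the boundary bookkeeping you flag is the only point to check, and you have it right.
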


\begin{proof}
Let $\bar{\omega}^{\left( x,t\right)}$ denote the caloric measure for $\Psi
_{2r}\left( Q,s\right) \cap D_{T}$. Since $\Delta _{2r}\left( Q,s\right) $
lies in the boundary of an $NTA$ domain we can apply the above result to $\bar{\omega}^{\left( x,t\right)}\left(\Delta_{2r}\left(Q,s\right)\right)$.
For $\left( x,t\right) \in \Psi _{r}\left( Q,s\right) \cap D_{T}$, 
\begin{align*}
u\left( x,t\right) & =\int_{\partial _{p}\left( \Psi _{2r}\left( Q,s\right)
\cap D_{T}\right) }u\mathrm{d}\bar{\omega}^{\left( x,t\right)
}=\int_{\partial _{p}\left( \Psi _{2r}\left( Q,s\right) \cap D_{T}\right)
\setminus \Delta _{2r}}u\mathrm{d}\bar{\omega}^{\left( x,t\right) } \\
& \leq \sup_{\Psi _{2r}\left( Q,s\right) \cap D_{T}}u=\left( 1-\bar{\omega}
^{\left( x,t\right) }\left( \Delta _{2r}\right) \right) \sup_{\Psi
_{2r}\left( Q,s\right) \cap D_{T}}u.
\end{align*}
The previous lemma implies the result with $\theta =1-C<1$.
\end{proof}
\qed

\begin{lemma}
\label{Growth}
Let u be a nonnegative solution of Lu=0 in D. \ Let $\left( x,t\right) $ and 
$\left( y,s\right) $ be in $\Omega $, with $t-s>0,$ $\left( t-s\right)
^{1/2}\geq \theta ^{-1}d\left( x,y\right) $ for some $\theta >1.$
Furthermore, suppose that $d\left( x,\partial \Omega \right) >\epsilon ,$ $
d\left( y,\partial \Omega \right) >\epsilon ,$ and $d\left( x,y\right) \leq
C\epsilon ,$~$\left( t-s\right) ^{1/2}\leq C\epsilon ,$ for some $\epsilon
>0.$ Then, there exists a constant $N=N\left( X,\theta ,r_{0},C\right) $
such that%
\begin{equation*}
u\left( y,s\right) \leq Nu\left( x,t\right)
\end{equation*}
\end{lemma}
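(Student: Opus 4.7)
My plan is to prove this interior Harnack-type bound by iterating the parabolic Harnack inequality (Theorem~\ref{Harnack1}) along a finite chain of space-time points $(y,s)=(z_0,\tau_0),(z_1,\tau_1),\dots,(z_J,\tau_J)=(x,t)$ with the $\tau_i$ strictly increasing. The hypothesis $(t-s)^{1/2}\ge\theta^{-1}d(x,y)$ is exactly what keeps the exponential factor from Theorem~\ref{Harnack1} uniformly bounded at every step, while the remaining size hypotheses, together with the NTA Harnack chain condition, force the number of steps $J$ to be bounded in terms of $C$, $\theta$, and the NTA constant $M$.

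First I would invoke the NTA Harnack chain condition. Since $d(x,\partial\Omega),d(y,\partial\Omega)>\epsilon$ and $d(x,y)\le C\epsilon<2^{k}\epsilon$ with $k=\lceil\log_2 C\rceil+1$, one obtains $(M,X)$-non-tangential balls $B_1,\dots,B_p$ joining $x$ to $y$, of length $p=p(C,M)$, each with radius $r_i\ge\epsilon/M$ and distance to $\partial\Omega$ at least $r_i/M$. Picking $\bar z_i\in B_i\cap B_{i+1}$ and then subdividing each segment $\bar z_i\bar z_{i+1}$ further into a bounded number of sub-steps inside $B_i$ (the number depending only on $M$), one produces a spatial chain $y=z_0,\dots,z_{J_1}=x$ of length $J_1=J_1(C,M)$, with $d(z_i,z_{i+1})\le\epsilon/(4M^2)$ and $B_d(z_i,\epsilon/(4M^2))\subset\Omega$ for every $i$. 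Using $(t-s)\ge\theta^{-2}d(x,y)^2$ and $(t-s)^{1/2}\le C\epsilon$, I then choose a uniform time subdivision $\tau_i=s+i(t-s)/J$ with $J\ge J_1$ large enough that $\Delta\tau=(t-s)/J$ satisfies both $\Delta\tau\ge d(z_i,z_{i+1})^2$ and $\sqrt{\Delta\tau}\le\epsilon/(4M^2)$; both constraints can be met simultaneously with $J=J(C,\theta,M)$.

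With these choices, $R_i:=d(z_i,z_{i+1})\vee\sqrt{\Delta\tau}=\sqrt{\Delta\tau}\le\epsilon/(4M^2)$, so the cylinder $[\tau_i,\tau_i+R_i^2]\times B_d(z_{i+1},R_i)$ is contained in $D$ (spatial inclusion from non-tangentiality, temporal because $\tau_i+R_i^2=\tau_{i+1}\le t<T$). Theorem~\ref{Harnack1} yields
$$u(z_i,\tau_i)\le u(z_{i+1},\tau_{i+1})\exp\!\left(M_H\left(1+\frac{d(z_i,z_{i+1})^2}{\Delta\tau}\right)\right)\le K_0\,u(z_{i+1},\tau_{i+1}),$$
with $K_0=e^{2M_H}$ since $d(z_i,z_{i+1})^2/\Delta\tau\le 1$; here $M_H=M_H(X)$ denotes the Harnack constant. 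Iterating $J$ times gives $u(y,s)\le K_0^{J}u(x,t)=:N\,u(x,t)$, with $N=N(X,\theta,r_0,C)$ as required.

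The main obstacle is precisely this joint bookkeeping: one must choose the spatial and temporal subdivisions compatibly so that the Harnack cylinders fit inside $D$ while the ratio $d(z_i,z_{i+1})^2/\Delta\tau$ remains uniformly bounded, using only a number of steps depending on $C,\theta,M$. The assumption $(t-s)^{1/2}\ge\theta^{-1}d(x,y)$ is what makes this simultaneous control achievable in a bounded number of steps; without it, either the chain would blow up in length or the exponential factor would become unbounded.
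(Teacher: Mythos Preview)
Your approach is correct and is precisely the ``standard adaptation'' the paper has in mind: the paper itself omits the proof and simply refers to Lemma~2.2 in \cite{G}, whose argument is exactly an iteration of the parabolic Harnack inequality along a Harnack chain, with the time increments chosen compatibly with the spatial step sizes. So there is nothing to compare --- your write-up is a fleshed-out version of what the paper leaves implicit.

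Two small points of bookkeeping to tighten. First, you produce $J_1$ spatial nodes but $J\ge J_1$ time steps; make explicit that for the surplus steps you keep the spatial point fixed (so $d(z_i,z_{i+1})=0$ there), which makes the Harnack factor at those steps just $e^{M_H}$. Second, the phrase ``subdividing each segment $\bar z_i\bar z_{i+1}$'' should be replaced by a metric argument, since line segments are not available in a Carnot--Carath\'eodory space: inside each $(M,X)$-non-tangential ball $B_i$ one can interpolate a controlled number of points (depending only on $M$ and the doubling constant~\eqref{doubling}) with mutual distances $\le \epsilon/(4M^2)$ and distance to $\partial\Omega$ bounded below by $\epsilon/(4M^2)$; this follows from the doubling property (or, equivalently, from the existence of sub-unit paths). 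With these two clarifications your constants depend only on $X,\theta,M,C$ (hence on $X,\theta,r_0,C$), as claimed.
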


\begin{proof}
The proof of this lemma is a standard adaptation of that of Lemma 2.2 in \cite{G} and we omit it.
\end{proof}
\qed

The next result is known as the Carleson estimate. The proof is provided for completeness. 

\begin{theorem}
Let $\left(Q,s\right)\in \partial_{p} D_{T}$ and $u$ be a nonnegative
solution of $\mathcal{L}u=0$ in $D_{T}$ that continuously vanishes on $
\Delta_{2r}\left(Q,s\right)$. Then there exists a constant $C>0$, depending
on $\mathcal{L}$, $M$, and $r_{0}$, such that for $r<r_{0}$ and $
\left(x,t\right)\in\Psi_{r}\left(Q,s\right)$, 
\begin{equation}
u\left(x,t\right)\leq Cu\left(\bar{A}_{r}\left(Q_{0},s_{0}\right)\right).
\end{equation}
\end{theorem}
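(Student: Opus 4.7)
The natural strategy is the classical one, pioneered in the elliptic setting by Caffarelli--Fabes--Mortola--Salsa and adapted to parabolic equations in \cite{FGS}: combine the boundary H\"older decay from Corollary \ref{HolBou} with the forward-in-time interior Harnack principle (Theorem \ref{Harnack1}, in the form packaged by Lemma \ref{Growth}) to control $u$ at an arbitrary point of $\Psi_r(Q,s)\cap D_T$ in terms of its value at the reference corkscrew $\bar A_r(Q,s)=(A_r(Q),s+2r^2)$, which lies a bounded time-step \emph{after} every point of $\Psi_r$. I would normalize $u(\bar A_r(Q,s))=1$ and aim to show $u\le C$ on $\Psi_r(Q,s)\cap D_T$.

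The first ingredient is a quantitative decay obtained by iterating Corollary \ref{HolBou}: applied at any boundary point $(Q',s')\in\Delta_{2r}(Q,s)$ on whose neighborhood $u$ vanishes, dyadic iteration across scales $\rho,2\rho,\dots$ gives
\[
\sup_{\Psi_\rho(Q',s')}u\;\le\;C\!\left(\tfrac{\rho}{r}\right)^{\!\alpha}\sup_{\Psi_{r}(Q',s')}u,\qquad \alpha=-\frac{\log\theta}{\log 2},
\]
so that $u$ is small at points very close to the lateral boundary compared with its supremum over a fixed cylinder. The second ingredient is interior control: for $(y,\tau)\in\Psi_r(Q,s)\cap D_T$ with $d(y,\partial\Omega)\ge \eta r$, the time gap $\tau-(s+2r^2)<0$ together with $d(y,A_r(Q))\lesssim r$ lets us build a Harnack chain joining $(y,\tau)$ to $\bar A_r(Q,s)$ inside a controlled neighborhood of $D_T$ (using the $NTA$ Harnack chain property and Proposition \ref{UseChain} if needed), whence Lemma \ref{Growth} yields $u(y,\tau)\le C_\eta u(\bar A_r(Q,s))=C_\eta$.

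Finally, I would combine these by a contradiction/iteration. Assume there exists $(x_0,t_0)\in\Psi_r(Q,s)\cap D_T$ with $u(x_0,t_0)=N$ for $N$ arbitrarily large, and set $\delta_0=d(x_0,\partial\Omega)$. If $\delta_0\ge \eta r$, interior Harnack already gives $N\le C_\eta$, a contradiction once $N$ exceeds $C_\eta$. Otherwise the boundary decay forces
\[
\sup_{\Psi_r(Q,s)}u\;\ge\;c\,N\,(\delta_0/r)^{-\alpha},
\]
so we can select a new point $(x_1,t_1)\in\Psi_r(Q,s)\cap D_T$ with $u(x_1,t_1)\ge \lambda N$ for some $\lambda>1$ and $\delta_1:=d(x_1,\partial\Omega)$ strictly larger than $\delta_0$ on the correct scale. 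Iterating, either the sequence reaches the regime $\delta_j\ge\eta r$ (and the interior step closes the argument) or $u(x_j,t_j)\to\infty$ while the points stay in a bounded set, contradicting the local boundedness of $u$ on $\overline{\Psi_{2r}(Q,s)}\cap D_T$.

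The main obstacle is the parabolic bookkeeping: one must verify that at every stage of the iteration the point remains in $\Psi_{cr}(Q,s)$ for a fixed $c$ and that the time coordinate $t_j$ stays below $s+2r^2$, so that Lemma \ref{Growth} and Theorem \ref{Harnack1} apply with constants depending only on $L$, $M$, $r_0$. Once these time-and-space scales are tracked correctly, the iteration closes and yields the Carleson estimate with a universal constant $C$.
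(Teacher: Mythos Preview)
Your overall plan is exactly the paper's approach: normalize, combine the boundary H\"older decay of Corollary~\ref{HolBou} with the interior Harnack packaged in Lemma~\ref{Growth}, and run a contradiction iteration. The identification of the ``parabolic bookkeeping'' (keeping the iterated points inside a fixed $\Psi_{cr}$ with times below $s+2r^2$) as the main technical issue is also correct and is precisely what the paper spends most of its effort on.

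However, the mechanics of your iteration are reversed, and as written there is a gap. You assert that from $u(x_0,t_0)=N$ with $\delta_0=d(x_0,\partial\Omega)<\eta r$, the H\"older decay produces $(x_1,t_1)$ with $u(x_1,t_1)\ge\lambda N$ \emph{and} $\delta_1>\delta_0$. The growth $u(x_1,t_1)\ge cN(r/\delta_0)^\alpha$ does follow from the decay, but nothing forces $\delta_1>\delta_0$: the supremum over $\Psi_r$ could perfectly well be attained at a point as close to, or closer to, $\partial\Omega$ than $x_0$. Without a lower bound on $\delta_j$, you cannot guarantee a fixed amplification factor $\lambda>1$ at each step (since the factor $(r/\delta_j)^\alpha$ degenerates as $\delta_j\to\eta r$), so neither branch of your ``either/or'' closes.

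The paper's iteration runs the other way and uses \emph{both} ingredients at every step. From $v(y_k,s_k)>C_2^{N+k-1}$, the interior Harnack (contrapositive of Lemma~\ref{Growth}) forces $d(y_k,\partial\Omega)<r/C_1^{N+k-1}$; then, centering at the nearest boundary point $(Q_k,s_k)$, the H\"older decay is iterated over a \emph{fixed} number of dyadic scales (from $r/C_1^{N+k-1}$ up to $r/C_1^{5+k-1}$), producing amplification by the fixed factor $2^K>C_2$ and hence a new point with $v>C_2^{N+k}$. Thus the distances $\delta_k$ \emph{decrease} geometrically and the values blow up, contradicting the continuous vanishing of $u$ on $\Delta_{2r}$. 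If you rewire your iteration this way and carry out the time/space bookkeeping you already anticipated, your proof becomes the paper's.
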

\begin{proof}
We can assume that $u\left(\bar{A}_{r}\left(Q_{0},s_{0}\right)\right)\neq0$. If $u\left(\bar{A}_{r}\left(Q_{0},s_{0}\right)\right)=0$, by the maximum principle $u\equiv0$, since $u\geq 0$ and $Lu=0$. Let 
\begin{displaymath}
v\left(x,t\right)=\frac{u\left(x,t\right)}{u\left(\bar{A}_{r}\left(Q_{0},s_{0}\right)\right)}.
\end{displaymath}
Let $\left(Q,s\right)\in \Delta_{r}\left(Q_{0},s_{0}\right)$ and $\rho>0$ such that $\Psi_{\rho}\left(Q,s\right)\subset \Psi_{2r}\left(Q_{0},s_{0}\right)$. By the H\"older continuity, there exists a constant $C_{1}\geq 2$, depending on $L,M,r_{0}$, such that
\begin{equation}
\label{Carleson1}
\sup_{\Psi_{\rho/C_{1}}\left(Q,s\right)}v\leq \frac{1}{2}\sup_{\Psi_{\rho}\left(Q,s\right)}v.
\end{equation}
Let $k$ be a non-negative integer, then by the Lemma \ref{Growth} there is a constant $C_{2}$, depending only on $C_{1}$, such that if $\left(y,s\right)\in\Psi_{3/2r}\left(Q_{0},s_{0}\right)$, with $\frac{3}{2r^{2}}\geq s-s_{0}$ and $v\left(y,s\right)>C^{k}_{2}v\left(\bar{A}_{r}\left(Q_{0},s_{0}\right)\right)=C^{k}_{2}$, then
\begin{equation}
dist\left(y,\partial \Omega\right)<\frac{r}{C^{k}_{1}}.
\label{Carleson2}
\end{equation}
Fix $K\geq1$ such that $2^{K}>C_{2}$, and set $N=K+5,C=C_{2}$.\\
Claim: $v\left(x,t\right)\leq C$, for all $\left(x,t\right)\in \Psi^{D_{T}}_{r}\left(Q_{0},s_{0}\right)$.
Suppose that the claim is not true. The idea is to construct a sequence of points in $D_{T}$ whose limit is on the lateral boundary $S_{T}$ and on which $v$ grows to infinity. First, there is $\left(y_{1},s_{1}\right)\in \Psi^{D_{T}}_{r}\left(Q_{0},s_{0}\right)$ such that $v\left(y_{1},s_{1}\right)>C$. By (\ref{Carleson2}) we must have that dist$\left(y_{1},\partial \Omega\right)< r/C^{N}_{1}$. Let $\left(Q_{1},s_{1}\right)$ be the point in $S_{T}$ nearest to $\left(y_{1},s_{1}\right)$, then
\begin{displaymath}
d\left(Q_{1},Q_{0}\right)\leq d\left(Q_{1},y_{1}\right)+d\left(y_{1},Q_{0}\right)\leq \frac{r}{C^{N}_{1}}+r\leq \left(\frac{1}{2^{5}}+1\right)r.
\end{displaymath}
If $\left(x,t\right)\in \Psi_{\rho/C^{5}_{1}}\left(Q_{1},s_{1}\right)$, we have 
\begin{displaymath}
d\left(x,Q_{0}\right)\leq d\left(Q_{1},x\right)+d\left(Q_{1},Q_{0}\right)\leq \frac{r}{C^{N}_{1}}+\left(\frac{1}{2^{5}}+1\right)r\leq \left(\frac{1}{2^{4}}+1\right)r< \left(\frac{3}{2}\right)r
\end{displaymath}
and
\begin{displaymath}
\left|t-s_{0}\right|\leq \left|t-s_{1}\right|+\left|s_{1}-s_{0}\right|\leq\left(\frac{1}{2^{10}}+1\right)r^{2}<\frac{9}{4}r^{2}.
\end{displaymath}
We have just proved that 
\begin{equation}
\label{Carleson3}
\Psi_{\rho/C^{5}_{1}}\left(Q_{1},s_{1}\right)\subset \Psi_{2r}\left(Q_{0},s_{0}\right)
\end{equation}
By (\ref{Carleson1}) and the fact that $N=K+5$, 
\begin{displaymath}
\sup_{\Psi_{\rho/C^{5}_{1}}\left(Q_{1},s_{1}\right)}v\geq 2^{M}\sup_{\Psi_{\rho/C^{N}_{1}}\left(Q_{1},s_{1}\right)}v> C_{2}v\left(y_{1},s_{1}\right)>C^{N+1}_{2}.
\end{displaymath}
This implies the existence of $\left(y_{2},s_{2}\right)\in \Psi^{D_{T}}_{r}\left(Q_{1},s_{1}\right)$ such that $v\left(y_{2},s_{2}\right)>C^{N+1}_{2}$. Observe that
\begin{displaymath}
s_{2}-s_{0}\leq\left|s_{2}-s_{1}\right|+\left|s_{1}-s_{0}\right|<\frac{r^{2}}{C^{10}_{1}}+r^{2}<\left(\frac{1}{2^{5}}+1\right)r^{2}<\frac{3}{2}r^{2}.
\end{displaymath}
This means that we can apply (\ref{Carleson2}) to get dist$\left(y_{2},\partial \Omega\right)<r/C^{N+1}_{1}$. As before, let $\left(Q_{2},s_{2}\right)$ be the point in $S_{T}$ nearest to $\left(y_{2},s_{2}\right)$. Then, 
\begin{align*}
d\left(Q_{2},Q_{0}\right) & \leq d\left(Q_{2},y_{2}\right)+d\left(y_{2},Q_{1}\right)+d\left(Q_{1},y_{1}\right)+d\left(Q_(0),y_{1}\right)\\
& \frac{r}{C^{N+1}_{1}}+ \frac{r}{C^{5}_{1}}+\frac{r}{C^{N}_{1}}+r\leq \left(\frac{1}{2^{M+1}}+\frac{1}{2}+1\right)\frac{r}{2^{5}}+r\leq \left(\frac{1}{2^{4}}+1\right)r.
\end{align*}
Choose $\left(x,t\right)\in \Psi_{\rho/C^{5+1}_{1}}\left(Q_{2},s_{2}\right)$, hence
\begin{align*}
d\left(x,Q_{0}\right) & \leq d\left(x,Q_{2}\right)+d\left(Q_{2},Q_{0}\right)\leq \frac{r}{C^{5+1}_{1}}+\left(\frac{1}{2^{4}}+1\right)r
& \leq \left(\frac{1}{2^{3}}+1\right)r<\frac{3}{2}r.
\end{align*}
In similar way we can prove that $\left|t-s_{0}\right|<\frac{9}{4}r^{2}$. We have just proved that 
\begin{equation}
\label{Carleson4}
\Psi_{\rho/C^{5+1}_{1}}\left(Q_{2},s_{2}\right)\subset \Psi_{2r}\left(Q_{0},s_{0}\right)
\end{equation}
Once again by (\ref{Carleson1}) we get 
\begin{displaymath}
\sup_{\Psi_{\rho/C^{5+1}_{1}}\left(Q_{2},s_{2}\right)}v\geq 2^{M}\sup_{\Psi_{\rho/C^{N+1}_{1}}\left(Q_{2},s_{2}\right)}v> C_{2}v\left(y_{1},s_{1}\right)>C^{N+2}_{2}.
\end{displaymath}
We can conclude that there is $\left(y_{3},s_{3}\right)\in \Psi^{D_{T}}_{r}\left(Q_{2},s_{2}\right)$ such that
\begin{displaymath}
v\left(y_{3},s_{3}\right)>C^{N+2}_{2}.
\end{displaymath}
Given that $s_{3}-s_{0}\leq\frac{3}{2}r^{2}$, we get again dist$\left(y_{3},\partial \Omega\right)<\frac{r}{C^{N+2}_{1}}$, and we choose $\left(Q_{3},s_{3}\right)$ the point in $S_{T}$ nearest to $\left(y_{3},s_{3}\right)$. If we keep doing this we will get sequences $\left\{\left(y_{k},s_{k}\right)\right\},\left\{\left(Q_{k},s_{k}\right)\right\}$ with the following properties
\begin{enumerate}
	\item $\Psi_{\rho/C^{5+k-1}_{1}}\left(Q_{k},s_{k}\right)\subset \Psi_{2r}\left(Q_{0},s_{0}\right)$
	\item $\left(y_{k},s_{k}\right)\in \Psi_{\rho/C^{5+k-1}_{1}}\left(Q_{k},s_{k}\right)$
	\item $dist\left(y_{k},\partial \Omega\right)< r/C^{N+k-1}_{1}$
	\item $s_{k}-s_{0}\leq \frac{3}{2}r^{2}$
	\item $v\left(y_{k},s_{k}\right)>C^{N+k-1}_{2}$
\end{enumerate}
The desire sequence is $\left(y_{k},s_{k}\right)$ by (3). We have reached a contradiction since $v$ vanishes in the lateral boundary $S_{T}$. 
\end{proof}
\qed

The previous lemma has the following global version.

\begin{theorem}
Let $\left(Q,s\right)\in \partial_{p} D_{T}$ and $u$ be a nonnegative
solution of $\mathcal{L}u=0$ in $D_{T}$ that continuously vanishes in $
\partial_{p} D_{T}\setminus\Delta_{2r}\left(Q,s\right)$ Then there exists a
constant $C>0$, depending on $\mathcal{L}$, $M$, and $r_{0}$, such that for $
\left(x,t\right)\in D_{T} \setminus\Psi_{r}\left(Q,s\right)$ we have 
\begin{equation}
u\left(x,t\right)\leq Cu\left(\bar{A}_{r}\left(Q,s\right)\right).
\label{VarCar}
\end{equation}
\label{VarCarThe}
\end{theorem}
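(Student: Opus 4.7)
The strategy is to bootstrap the local Carleson estimate just proved to a global one, combining it with the parabolic Harnack inequality (Theorem~\ref{Harnack1}), the Harnack chain condition built into the $NTA$ definition, and the fact that the vanishing of $u$ on most of $\partial_p D_T$ strongly constrains $u$ inside. Writing $(X_0,T_0):=\bar A_r(Q,s)$ and $v:=u/u(X_0,T_0)$ (we may assume $u(X_0,T_0)>0$, else $u\equiv 0$ by the strong maximum principle), the goal becomes $v\leq C$ on $D_T\setminus\Psi_r(Q,s)$.

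\emph{Step 1 (causality reduction).} Since $\Delta_{2r}(Q,s)\subset\{|\tau-s|\leq 4r^2\}$, the function $u$ vanishes on $\partial_p D_T\cap\{\tau<s-4r^2\}$; the uniqueness of the PWB solution (Bony's theory invoked in Section~2) forces $u\equiv 0$ on $\Omega\times(0,\max(0,s-4r^2))$. Likewise $u$ vanishes on $S_T\cap\{\tau>s+4r^2\}$, so the maximum principle gives $\sup_x u(x,\tau)\leq \sup_x u(x,s+4r^2)$ for $\tau\geq s+4r^2$. This reduces matters to the time strip $|\tau-s|\leq 4r^2$.

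\emph{Step 2 (near-boundary case).} Fix a small $\gamma=\gamma(M)>0$. Suppose $\mathrm{dist}(x,\partial\Omega)<\gamma r$, and let $(Q',\tau)\in S_T$ be the lateral boundary point nearest to $(x,\tau)$. The hypothesis $(x,\tau)\notin\Psi_r(Q,s)$ forces $d(x,Q)\geq r$ or $|\tau-s|\geq r^2$, which, for $\gamma$ small enough, propagates to a parabolic separation of at least $r/2$ between $(Q',\tau)$ and $(Q,s)$. Choosing $\rho$ a suitable small multiple of $r$, the surface box $\Delta_{2\rho}(Q',\tau)$ becomes disjoint from $\Delta_{2r}(Q,s)$, so $u$ vanishes continuously on it and the previously established (local) Carleson theorem applies at $(Q',\tau)$, giving
\begin{equation*}
u(x,\tau)\leq C\,u(\bar A_\rho(Q',\tau)).
\end{equation*}
The point $\bar A_\rho(Q',\tau)$ is interior at distance of order $\rho\sim r$ from $\partial\Omega$, reducing to Step~3.

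\emph{Step 3 (interior case).} Assume $\mathrm{dist}(x,\partial\Omega)\geq\gamma r$ and $|\tau-s|\leq 4r^2$. If $\tau\leq s+2r^2=T_0$, the Harnack chain condition in the $NTA$ definition, combined with Proposition~\ref{UseChain} when one wishes the chain to avoid $(Q,s)$, furnishes a chain of $(M,X)$-nontangential balls of size of order $r$ joining $(x,\tau)$ forward in time to $(X_0,T_0)$, of length controlled by $M$, $r_0$ and the intrinsic scale of $\Omega$. Iterating Theorem~\ref{Harnack1} along the chain yields $v(x,\tau)\leq C_1$. For $\tau\in(T_0,s+4r^2]$, Harnack cannot be applied from $(X_0,T_0)$ in that direction; instead one Harnacks $(x,\tau)$ forward to a point on the slice $\{\tau=s+4r^2\}$ and closes up using the reduction of Step~1.

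The main obstacle is the quantitative control of the Harnack chain in Step~3: the conclusion requires a constant depending only on $L,M,r_0$, so the chain must have uniformly bounded length. This is exactly what the $NTA$ structure and Proposition~\ref{UseChain} deliver, provided one carefully tracks the forward-in-time direction of the parabolic Harnack inequality. A secondary subtlety is the verification, in Step~2, of the buffer estimate between $(Q',\tau)$ and $(Q,s)$ starting from $(x,\tau)\notin\Psi_r(Q,s)$, with enough slack to guarantee disjointness of the surface boxes and thus the applicability of the local Carleson theorem.
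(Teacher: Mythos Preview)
Your proposal has a genuine gap in Step~3. After your Step~1 you have localized only in time, so the interior point $(x,\tau)$ may still satisfy $d(x,Q)$ of order $\operatorname{diam}\Omega$, while the target $(X_0,T_0)=\bar A_r(Q,s)$ is at spatial scale $r$ from $Q$ and the available forward time is at most $6r^2$. The $NTA$ Harnack chain condition produces a chain of length $Mk$ with $2^k\epsilon>d(x,X_0)$; here $\epsilon\sim r$ and $d(x,X_0)$ may be $\sim\operatorname{diam}\Omega$, so $k\sim\log(\operatorname{diam}\Omega/r)$, which is \emph{not} bounded by $M$ and $r_0$ alone. Worse, each parabolic Harnack step along a ball of radius $\sim r$ consumes time $\sim r^2$, so a chain of length $k$ needs time $\sim kr^2$, while you only have $O(r^2)$. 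Thus the forward-in-time chaining you describe simply cannot be carried out once $d(x,Q)\gg r$, and Proposition~\ref{UseChain} does not rescue this: it controls chains between points at comparable distance $\sim\epsilon$ apart, not between points separated by the full diameter of $\Omega$.

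The paper avoids this by a different reduction: rather than only localizing in time, it invokes the maximum principle on $D_T\setminus\overline{\Psi_r(Q,s)}$ to reduce the estimate to points $(x,t)\in D_T\cap\partial_p\Psi_r(Q,s)$. On that set one automatically has $d(x,Q)\leq r$ and $|t-s|\leq r^2$, so every point is at spatial distance $O(r)$ from $A_r(Q)$ with $O(r^2)$ forward time to $T_0=s+2r^2$; the Harnack chain is then of length $O_M(1)$ and the constant depends only on $L$, $M$, $r_0$. Your Step~2 (local Carleson at a nearby lateral boundary point) is essentially the same as the paper's near-boundary argument, but without the prior spatial localization it only feeds into the broken Step~3. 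The fix is exactly the paper's: reduce first to $\partial_p\Psi_r(Q,s)\cap D_T$ by the maximum principle, and then run your Steps~2--3 on that small set.
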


\begin{proof}
We are providing only the proof for the case $s>0$. The case $s=0$ is
treated in similar way. By the maximum principle it suffices to prove (\ref%
{VarCar}) when $\left(x,t\right)\in \partial_{p} \Psi_{r}\left(Q,s\right)$
and $t>s-4r^{2}$. The first step is to obtain the estimate near the lateral
boundary of $D_{T}$ with the help of Carleson estimate. Near the
lateral boundary we will use both the Carleson estimate and the Harnack
inequality.

We can choose $\delta>0$ small enough so that for 
\begin{equation*}
\left(\bar{Q},\bar{s}\right)\in \partial_{p} \Psi_{r}\left(Q,s\right)\cap
S_{T},
\end{equation*}
$\Psi_{2\delta r}\left(\bar{Q},\bar{s}\right)\cap \Psi_{r/2}\left(Q,s\right)$
and $\bar{s}+2\delta^{2}r<s+2r^{2}$. Then, there exist $C>0$ for which for
all $\left(\bar{Q},\bar{s}\right)\in \partial_{p}
\Psi_{r}\left(Q,s\right)\cap S_{T}$, we have 
\begin{equation*}
u\left(x,t\right)\leq u\left(\bar{A}_{\delta r}\left(\bar{Q},\bar{s}%
\right)\right)
\end{equation*}
with $\left(x,t\right)\in\Psi_{\delta r}\left(\bar{Q},\bar{s}\right)$. By
the scale invariant Harnack inequality, there exist a constant $C>0$ such
that for all 
\begin{equation*}
\left(\bar{Q},\bar{s}\right)\in \partial_{p} \Psi_{r}\left(Q,s\right)\cap
S_{T},
\end{equation*}
we have 
\begin{equation*}
u\left(\bar{A}_{\delta r}\left(\bar{Q},\bar{s}\right)\right)\leq
Cu\left(A_{r}\left(Q,s\right)\right).
\end{equation*}

With the help of the two previous inequalities and a covering argument imply
that $u\left(x,t\right)\leq Cu\left(A_{r}\left(Q,s\right)\right)$ holds on 
\begin{equation*}
\partial_{p} \Psi_{r}\left(Q,s\right)\cap \left\{\left(x,t\right)\mid \text{
dist}\left(x,\partial\Omega\right)\leq cr\right\},
\end{equation*}
where $c>0$ and depends only on the $NTA$ character of $\Omega$. In the
remaining part of $\partial_{p} \Psi_{r}\left(Q,s\right)$ we use Harnack's
Principle to finish the proof.
\end{proof}
\qed

\begin{theorem}
\label{DahlbergTheo}
Let $\left( Q_{0},s_{0}\right) \in S$, then for sufficiently
small $r$, say 
\begin{equation*}
r<\min {\left( r_{0}/2,\sqrt{s}/2,\right)},
\end{equation*}
and each $\left( x,t\right) \in D$ with $s+4a^{2}r^{2}\leq t$ we have 
\begin{align}
\label{Dahlest}
C^{-1}\left\vert B_{d}\left( Q_{0},r\right) \right\vert G\left( x,t;\bar{A}
_{2a^{2}r}\left( Q_{0},s_{0}\right) \right)  & \leq \omega ^{\left( x,t\right) }\left( \Delta
_{r}\left( Q_{0},s_{0}\right) \right)  \\ & \leq C\left\vert B_{d}\left( Q_{0},r\right)
\right\vert G\left( x,t;\underline{A}_{2a^2r}\left( Q_{0},s_{0}\right) \right) \notag
\end{align}
\end{theorem}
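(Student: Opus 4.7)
I follow the strategy of Fabes, Garofalo, and Salsa \cite{FGS}, adapted to the sub-Riemannian setting. Both $u(x,t):=\omega^{(x,t)}(\Delta_r(Q_0,s_0))$ and $v(x,t):=|B_d(Q_0,r)|\,G(x,t;(y_0,\tau_0))$ (for an interior pole $(y_0,\tau_0)$) are non-negative $L$-caloric functions in $D_T$ (the latter away from its pole) that vanish on $S_T$ off $\Delta_r$, respectively off the pole. The two inequalities in (\ref{Dahlest}) are obtained by separately comparing $u$ with suitable choices of $v$ on a well-chosen surface inside $D_T$; the maximum principle then propagates each inequality throughout $D_T\cap\{t\geq s_0+4a^2r^2\}$.

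\textbf{Lower bound.} Take $(y_0,\tau_0)=\bar A_{2a^2r}(Q_0,s_0)$, the future corkscrew. For $(x,t)$ on $\partial_p\Psi_{r/2}(Q_0,s_0)\cap D_T$ the parabolic distance between $(x,t)$ and $(y_0,\tau_0)$ is of order $r$, so the Gaussian upper bound for the fundamental solution $p$, together with $G\leq p$, yields
\[
v(x,t)\;\leq\; |B_d(Q_0,r)|\,p(x,t;y_0,\tau_0)\;\leq\; C
\]
on that surface. Meanwhile Lemma \ref{NonVa}, applied at the scale $r/4$ around $(Q_0,s_0)$, gives $u(x,t)\geq c_0>0$ there. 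Hence $v\leq (C/c_0)\,u$ on $\partial_p\Psi_{r/2}(Q_0,s_0)\cap D_T$. Both sides vanish on $S_T$ and are $L$-caloric in the complementary region $\{t>\tau_0\}\cap D_T\setminus\overline{\Psi_{r/2}(Q_0,s_0)}$, so the maximum principle delivers the first inequality in (\ref{Dahlest}).

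\textbf{Upper bound.} Take $(y_0,\tau_0)=\underline A_{2a^2r}(Q_0,s_0)$, the past corkscrew. The Gaussian lower bound for $p$ gives $p(\bar A_r(Q_0,s_0);y_0,\tau_0)\geq c/|B_d(Q_0,r)|$ (the spatial separation and time lag are both of order $r$ up to a factor depending only on $a$). Combined with the comparability $G\sim p$ at interior points (see below), this yields $v(\bar A_r(Q_0,s_0))\geq c>0$. The global Carleson estimate (Theorem \ref{VarCarThe}), applied to the non-negative $L$-caloric function $v$, which vanishes on $S_T$, propagates this pointwise lower bound to $v\geq c'>0$ on the relevant portion of $\partial_p(\Psi_{2r}(Q_0,s_0)\cap D_T)\cap D_T$. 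Since $u\leq 1$ trivially, we obtain $u\leq C v$ on this interior surface; both sides vanish on $S_T$, and the maximum principle yields the second inequality in (\ref{Dahlest}).

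\textbf{Main obstacle.} The most delicate ingredient is the comparability $G(x,t;y_0,\tau_0)\sim p(x,t;y_0,\tau_0)$ at interior corkscrew poles, needed for the upper bound. This follows from the identity $G=p-H^{D_T}_p$ provided one controls the caloric replacement $H^{D_T}_p$ uniformly, which is done by a Harnack-chain argument that exploits the $NTA$ geometry and the fact that the pole lies at parabolic distance of order $r$ from the boundary. A related subtlety is the forward/backward parabolic asymmetry: placing the poles at the shifted times $s_0\pm 2(2a^2r)^2$ rather than at $s_0$ is essential so that the Gaussian exponent $\exp(-Md^2/(t-\tau))$ stays uniformly bounded away from $0$ on every surface where the maximum principle is invoked, and so that the Green function $G(x,t;\cdot)$ is well defined throughout the region $\{t\geq s_0+4a^2r^2\}$ where the statement is made.
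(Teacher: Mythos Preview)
Your overall strategy (compare $u=\omega^{(\cdot)}(\Delta_r)$ with $v=|B_d(Q_0,r)|\,G(\cdot;\bar A/\underline A)$ via the maximum principle) is the right spirit, but both halves have real gaps as written.

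\emph{Upper bound.} The step ``Theorem~\ref{VarCarThe} propagates the pointwise lower bound $v(\bar A_r)\ge c$ to $v\ge c'$ on $\partial_p(\Psi_{2r}\cap D_T)\cap D_T$'' is wrong: Carleson-type estimates give \emph{upper} bounds of a solution by its value at a corkscrew point, never lower bounds. In fact $v=|B_d(Q_0,r)|\,G(\cdot;\underline A_{2a^2r})$ vanishes on all of $S_T$, so on the portion of $D_T\cap\partial_p\Psi_{2r}$ lying near $\partial\Omega$ the function $v$ is arbitrarily small, whereas $u$ need not be. A pointwise comparison $u\le Cv$ on that surface is precisely a boundary comparison statement, which in this paper is Theorem~\ref{localcomp} --- proved \emph{after} and \emph{using} the Dahlberg estimate --- so invoking it here would be circular. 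The paper avoids this altogether: it uses the representation
\[
\int_{\partial_p D}\phi\,d\omega^{(x,t)}=\int_D G(x,t;\xi,\tau)\,L^*\phi(\xi,\tau)\,d\xi\,d\tau,
\]
chooses a cutoff $\phi\in C^\infty_0(B_X(Q_0,2ar)\times(s_0-4a^2r^2,s_0+4a^2r^2))$ with $|L^*\phi|\le C/r^2$, and applies the \emph{adjoint} Carleson estimate to $(\xi,\tau)\mapsto G(x,t;\xi,\tau)$ (an $L^*$-caloric function) to get $G(x,t;\xi,\tau)\le C\,G(x,t;\underline A_{2a^2r})$ on $\mathrm{supp}\,L^*\phi$. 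Integrating then gives the right-hand side of (\ref{Dahlest}) directly.

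\emph{Lower bound.} Your comparison surface $\partial_p\Psi_{r/2}(Q_0,s_0)\cap D_T$ lies entirely at times $|t-s_0|\le r^2/4$, while the pole $\bar A_{2a^2r}(Q_0,s_0)$ sits at time $s_0+8a^4r^2>s_0+r^2/4$. Hence the region you describe, $\{t>\tau_0\}\cap D_T\setminus\overline{\Psi_{r/2}}$, is simply $D_T\cap\{t>\tau_0\}$, and its parabolic boundary contains $\Omega\times\{\tau_0\}$, where $v$ has a delta-singularity at $y_0$; the maximum principle cannot be applied there. The standard fix (and the one in \cite{FGS}, to which the paper defers) is to excise a small parabolic cylinder $B_d(y_0,\rho)\times(\tau_0,\tau_0+\rho^2)$ with $\rho\sim r/M$ around the pole: on its top and lateral boundary the Gaussian upper bound gives $v\le C$, while $u\ge c_0$ there by Lemma~\ref{NonVa} plus forward Harnack; on the rest of $\Omega\times\{\tau_0\}$ one has $v=0$. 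You have identified the right ingredients (Gaussian bounds, Lemma~\ref{NonVa}), but the region must be chosen so that the singularity is genuinely isolated.
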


\begin{proof}
Fix $\left(x,t\right)\in D$ with $s+4r^{2}\leq t$ and define $g\left(\xi,\tau\right)=G\left(x,t;\xi,\tau\right)$ if $\left(\xi,\tau\right)\in D$ and $g\left(\xi,\tau\right)=0$ for $D^{c}$. For $\left(\xi,\tau\right)\in \mathbb{R}^{n+1}_{+}\setminus \partial_{p} D\cup \left\{\left(x,t\right)\right\}$,

\begin{equation}
g\left(\xi,\tau\right)=p\left(x,t;\xi,\tau\right)-\int_{\partial_{p} D}p\left(Q,s;\xi,\tau\right)d\omega^{\left(x,t\right)}\left(Q,s\right).
\label{repremeasure1}
\end{equation} 

By Fatou's lemma we have 
\begin{displaymath}
\int_{\partial_{p} D}p\left(Q,s;\xi,\tau\right)d\omega^{\left(x,t\right)}\left(Q,s\right)\leq p\left(x,t;\xi,\tau\right)<\infty.
\end{displaymath}

Let $\left\{\xi_{j}\right\}_{j\in \mathbb{N}}$, which converges non-tangentially to $\xi\in \partial \Omega$, so that $d\left(\xi_{j},\xi\right)\leq M d\left(\xi_{j},\partial \Omega\right)$. The Gaussian bounds imply that there is a constant $C>0$ such that for $Q\in\partial\Omega$ and $j\in \mathbb{N}$, $p\left(Q,s;\xi_{j},\tau\right)< C p\left(Q,s;\xi,\tau\right)$. Lebesgue dominated convergence theorem tells that (\ref{repremeasure1}) holds for $\xi\in\partial \Omega$. For a function $\phi\in C^{\infty}_{0}\left(\mathbb{R}^{n+1}_{+}\right)$ with $\phi\left(x,t\right)=0$,
\begin{displaymath}
\int_{D}g\left(\xi,\tau\right)L^{\ast}\phi d\xi d\tau =\int_{\partial D}\phi d\omega^{\left(x,t\right)}\left(Q,s\right) 
\end{displaymath}   
From the results in (\cite{DG}), there exists a function 
\begin{displaymath}
\phi\in C^{\infty}_{o}\left(B_{X}\left(Q,2ar\right)\times\left(s-4a^{2}r^{2},s+4a^{2}r^{2}\right)\right)
\end{displaymath}
with $0\leq \phi \leq 1$, $\phi\equiv 1$ on $B_{X}\left(Q,ar\right)\times\left(s-2a^{2}r^{2},s+2a^{2}r^{2}\right)$ and $\left|L^{\ast}\phi\right|\leq C/r^{2}$.
Using the analogue of Carleson estimate for nonnegative solutions of $L^{\ast
}v=0$, we obtain that there is a constant $C=C\left( M,r_{0}\right) $, such
that 
\begin{equation}
G\left( x,t;\xi ,\tau \right) \leq G\left( x,t;\underline{A}_{2a^{2}r}\left(
Q,s\right) \right) ,
\end{equation}
for each $\left( x,t\right) \in D$ with $t\geq s+4a^{2}r^{2}$ and $\left( \xi
,\tau \right) \in \Psi _{2r}\left( Q,s\right)$.  This gives the right hand side of 
(\ref{Dahlest}). The estimate from below follows the lines of the proof in \cite{FGS} for parabolic equations and therefore we omit it.
\end{proof}
\qed

In similar way we can prove the next theorem. This estimate is very important for the proof of the local comparison.  

\begin{theorem}
\label{Dahlberg-elliptic}
Let $\left( Q_{0},s_{0}\right) \in S$, then for sufficiently
small $r$, say 
\begin{equation*}
r<\min {\left( r_{0}/2,\sqrt{s}/2,\right) },
\end{equation*}
and each $\left( x,t\right) \in D$ with $x\in\Omega\setminus B_{d}\left(Q,ar\right)$ we have 
\begin{equation}
\omega ^{\left( x,t\right) }\left( \Delta_{r}\left( Q_{0},s_{0}\right) \right)  \leq C\left\vert B_{d}\left( Q_{0},r\right)
\right\vert G\left( x,t;\underline{A}_{2a^2r}\left( Q_{0},s_{0}\right) \right) 
\end{equation}
\end{theorem}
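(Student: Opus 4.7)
The plan is to follow the template of the proof of Theorem \ref{DahlbergTheo}, replacing the temporal hypothesis $s_0+4a^2r^2\le t$ by the spatial hypothesis $x\in\Omega\setminus B_d(Q_0,ar)$. Both conditions play the same role: they guarantee that the pole $(x,t)$ of the Green function $G(x,t;\cdot,\cdot)$ lies outside a parabolic neighborhood of $(Q_0,s_0)$ large enough to contain the support of the cutoff $\phi$ used in the representation formula.

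I would reuse the same cutoff $\phi\in C^\infty_0\bigl(B_X(Q_0,2ar)\times(s_0-4a^2r^2,s_0+4a^2r^2)\bigr)$ as in the previous proof, with $0\le\phi\le 1$, $\phi\equiv 1$ on $B_X(Q_0,ar)\times(s_0-2a^2r^2,s_0+2a^2r^2)$, and $|L^*\phi|\le C/r^2$. Since $\Delta_r(Q_0,s_0)$ is contained in the set where $\phi\equiv 1$, applying (\ref{repremeasure1}) to $g(\xi,\tau)=G(x,t;\xi,\tau)$ gives
\[
\omega^{(x,t)}\bigl(\Delta_r(Q_0,s_0)\bigr)\le \int_{\partial_p D}\phi\,\mathrm{d}\omega^{(x,t)}=\int_D G(x,t;\xi,\tau)\,L^*\phi(\xi,\tau)\,\mathrm{d}\xi\,\mathrm{d}\tau.
\]
For this manipulation to be legitimate one needs $(x,t)\notin\mathrm{supp}\,\phi$, which follows from $d(x,Q_0)>ar$ together with the inclusion $B_X(Q_0,2ar)\subset B_d(Q_0,2a^2r)$ coming from (\ref{X-balls}), after harmlessly enlarging the spatial radius of the cutoff by an absolute factor.

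The next step is to bound $G(x,t;\xi,\tau)$ pointwise on the support of $L^*\phi$. The function $(\xi,\tau)\mapsto G(x,t;\xi,\tau)$ is a nonnegative solution of $L^*v=0$ that continuously vanishes on the lateral boundary $S_T$. The spatial hypothesis $d(x,Q_0)>ar$ now takes the place of the temporal gap used in Theorem \ref{DahlbergTheo}: it ensures that $(x,t)$ is separated from $\Psi_{2ar}(Q_0,s_0)$, so that $G(x,t;\cdot,\cdot)$ is a bona fide $L^*$-solution throughout a neighborhood of that cylinder. The analogue of the Carleson estimate for $L^*$-solutions, in which the reversed time direction of the adjoint makes the point $\underline{A}_{2a^2r}(Q_0,s_0)$ play the role that $\bar{A}_r$ plays for $L$, then yields
\[
G(x,t;\xi,\tau)\le C\,G\bigl(x,t;\underline{A}_{2a^2r}(Q_0,s_0)\bigr),\qquad (\xi,\tau)\in \Psi_{2ar}(Q_0,s_0).
\]

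Combining the two displays with $|L^*\phi|\le C/r^2$ and using the doubling property (\ref{doubling}) to estimate the volume of $\mathrm{supp}\,\phi$ by $C\,r^{2}\,|B_d(Q_0,r)|$ gives
\[
\omega^{(x,t)}\bigl(\Delta_r(Q_0,s_0)\bigr)\le C\,|B_d(Q_0,r)|\,G\bigl(x,t;\underline{A}_{2a^2r}(Q_0,s_0)\bigr),
\]
as required. The only genuinely new ingredient beyond the proof of Theorem \ref{DahlbergTheo} is the adjoint Carleson bound above in the absence of a temporal gap; this is where the spatial condition $x\in\Omega\setminus B_d(Q_0,ar)$ enters, and it is the main obstacle to a completely routine adaptation of the earlier argument.
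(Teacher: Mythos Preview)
Your proposal is correct and follows exactly the approach the paper intends: the paper gives no separate proof for this theorem, merely stating that it is proved ``in similar way'' to Theorem \ref{DahlbergTheo}, and your outline is precisely that adaptation. Your closing worry is slightly overstated---the Carleson estimate for $L^*$ is a local result whose proof only uses the solution inside a cylinder $\Psi_{2\rho}(Q_0,s_0)$, so once the spatial hypothesis $d(x,Q_0)>ar$ (after the harmless constant adjustment you note) keeps the pole $(x,t)$ outside that cylinder, the adjoint Carleson bound applies verbatim and no genuinely new ingredient is required.
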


The next estimate is crucial to the proof of the boundary backward Harnack inequality, see section 4 below.  
\begin{lemma}
Let u be a nonnegative solution of Lu=0 in D. Let $\left( Q,s\right) \in S$
and $0<r\leq \frac{1}{2}\min {\left( r_{0},\sqrt{s}\right) }$. Then 
\begin{equation*}
u\left( \underline{A}_{r}\left( Q,s\right) \right) \leq Nr^{\gamma
}\inf_{\Psi _{r}^{D}}d^{-\gamma }u
\end{equation*}
with $d=d\left( x\right) \equiv \mathnormal{dist}\left( x,\partial \Omega
\right) $ and $N$,$\gamma $ are positive constants depending only on $\left(
X,M\right) $.
\end{lemma}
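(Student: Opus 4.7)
The inequality is equivalent to the pointwise bound
\[
u(\underline{A}_r(Q,s)) \leq N \left(\frac{r}{d(y)}\right)^{\gamma} u(y,t)
\]
for every $(y,t)\in\Psi_r\cap D$, from which the stated infimum version follows. We may assume $u>0$ on $D$ (else $u\equiv 0$ by the strong maximum principle). Fix $(y,t)\in\Psi_r^D$ and set $\delta=d(y)$; if $\delta\geq r$ a single application of Lemma \ref{Growth} handles things, so assume $\delta<r$. Let $Q_y\in\partial\Omega$ realize $d(y,Q_y)=\delta$, so $d(Q_y,Q)<2r$.

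The plan is to construct a parabolic Harnack chain from $\underline{A}_r(Q,s)$ to $(y,t)$ of length $k\approx\log_2(r/\delta)$. Because $\underline{A}_r(Q,s)$ sits at time $s-2r^2<t$, the chain can be arranged to increase in time, which is the direction in which parabolic Harnack propagates values. Choose dyadic corkscrew points $P_j=A_{2^{-j}r}(Q_y)$ for $j=0,1,\dots,k$, together with times $\tau_j$ that increase monotonically from $\tau_0=s-r^2$ up to $\tau_k=t-c\delta^2$, each consecutive gap $\tau_{j+1}-\tau_j$ being of order $4^{-(j+1)}r^2$. A preliminary horizontal step connects $\underline{A}_r(Q,s)$ to $(P_0,\tau_0)$ (both non-tangential at scale $r$, with positive time increment of order $r^2$), and a final forward step connects $(P_k,\tau_k)$ to $(y,t)$ (both at distance of order $\delta$ from $\partial\Omega$, with time gap of order $\delta^2$).

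At each intermediate link $j\to j+1$, both $P_j$ and $P_{j+1}$ lie at distance at least $c\,2^{-(j+1)}r/M$ from $\partial\Omega$, their spatial separation is at most $C\,2^{-(j+1)}r$, and the square root of the forward time gap is of order $2^{-(j+1)}r$. These are precisely the scale-invariant hypotheses of Lemma \ref{Growth} (equivalently Theorem \ref{Harnack1}), so there is a constant $N_0=N_0(X,M,r_0)$, independent of $j$, with
\[
u(P_j,\tau_j)\leq N_0\,u(P_{j+1},\tau_{j+1}).
\]
Iterating over the $k$ intermediate steps and composing with the two end steps yields
\[
u(\underline{A}_r(Q,s))\leq C\,N_0^{k}\,u(y,t)\leq C\,(r/\delta)^{\log_2 N_0}\,u(y,t),
\]
which is the desired pointwise bound with $\gamma=\log_2 N_0$ depending only on $X$ and $M$. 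Taking the infimum over $(y,t)\in\Psi_r^D$ finishes the proof.

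The main obstacle is the temporal bookkeeping: the sequence $\tau_j$ must land at $t$ after exactly $k$ steps while keeping each forward gap $\tau_{j+1}-\tau_j$ comparable to the square of the spatial scale at that stage; otherwise the per-step Harnack constant in Lemma \ref{Growth} would degenerate and the iteration would no longer collapse into a clean power of $r/\delta$. Since $t-s$ is only constrained to $(-r^2,r^2)$, one needs a small adjustment of the nominal geometric sequence of gaps to make the endpoint time agree with $t$. Once the time coordinate has been threaded correctly, the spatial chain through dyadic corkscrew points is a routine $NTA$ construction, in the spirit of the proof of Lemma 2.2 in \cite{G}.
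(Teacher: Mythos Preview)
Your approach is essentially the paper's: build a dyadic Harnack chain through corkscrew points $A_{2^{-j}r}(Q_y)$ (equivalently $A_{2^i d}(P)$ with $i=k-j$), apply Lemma~\ref{Growth} at each step with a uniform constant $N_0$, and read off $\gamma=\log_2 N_0$. The temporal obstacle you flag disappears if you anchor the times at the target rather than the source, setting $s_i = t - 2(2^i d)^2$ as the paper does; then $s_i - s_{i+1} = 6\cdot 4^i d^2$ is automatically comparable to the square of the spatial scale, and the first link from $\underline{A}_r(Q,s)$ to $(A_{2^{k-1}d}(P),s_{k-1})$ has a positive time gap because $s_{k-1}\geq t-r^2/2 > s-2r^2$.
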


\begin{proof}
Let $X=\left( x,t\right) \in \Psi _{r}^{D}\left( Q,s\right) $. Then, if $
d=d\left( x\right) =\mathnormal{dist}\left( x,\partial \Omega \right) $,
there exist $k\in \mathbb{N}$ such that 
\begin{equation*}
d\leq \frac{r}{2^{k}}.
\end{equation*}
Now, let $P\in \partial \Omega $ be such that $d\left( x,P\right) =d\left(
x,\partial \Omega \right) .$ Define $\left( x_{i},s_{i}\right) =\underline{A}
_{2^{i}d}\left( P,t\right) .$ Observe that
\begin{equation*}
d\left( A_{2^{k-1}d}\left( P\right) ,A_{r}\left( Q\right) \right) \leq
3\left( 2^{k}d\right) \leq 3\sqrt{2}\left( s_{k-1}-s+2r\right) ^{1/2}.
\end{equation*}
By Lemma \ref{Growth}, there is a constant $N_{0}=N_{0}\left( X,r_{0},M\right) $
such that
\begin{equation*}
u\left( \underline{A}_{r}\left( Q,s\right) \right) \leq N_{0}u\left(
x_{k-1},s_{k-1}\right) .
\end{equation*}
For $i\leq k-2,$ we have
\begin{equation*}
d\left( A_{2^{i+1}d}\left( P\right) ,A_{2^{i}}\left( P\right) \right) \leq
3\left( 2^{i}d\right) =\sqrt{\frac{3}{2}}\left( s_{i}-s_{i+1}\right) ^{1/2}.
\end{equation*}
Since  $d\left( A_{2^{i+1}d}\left( P\right) ,\Omega \right) \geq
M^{-1}2^{i}d,$ $d\left( A_{2^{i}}\left( P\right) ,\Omega \right) \geq
M^{-1}2^{i}d,$ and 
\begin{equation*}
d\left( A_{2^{i+1}d}\left( P\right) ,A_{2^{i}}\left( P\right) \right) \leq
3M\left( 2^{i}d/M\right) ,
\end{equation*}
there is a constant $N_{1}=N_{1}\left( X,r_{0},M\right) $ such that 
\begin{equation*}
u\left( x_{i+1},s_{i+1}\right) \leq N_{1}u\left( x_{i},s_{i}\right) .
\end{equation*}
Take $N=\max \left\{ N_{0},N_{1}\right\} $ and $\gamma $ such that $
2^{\gamma }=N.$ Then
\begin{equation*}
u\left( \underline{A}_{r}\left( Q,s\right) \right) \leq N^{k}=\left(
2^{k}\right) ^{\gamma }\leq \left( \frac{r}{d}\right) ^{\gamma }u\left(
x,t\right) .
\end{equation*}
Since $\left( x,t\right) \in \Psi _{r}^{D}\left(Q,s\right)$ is arbitrary we are done.
\end{proof}
\qed

\section{Backward Harnack Inequality and the Doubling Condition}
\label{sec 04}
We start this section by showing that a elliptic-type Harnack inequality is implied by the Carleson lemma.
\begin{theorem}
Let u be a nonnegative solution of Lu=0 in a bounded $NTA$ cylinder $D_{T}$
which continuously vanishes on $S_{T}$, and let $0<\delta\leq\frac{1}{2}\min{
\left(r_{0},T\right)}$. Then 
\begin{equation*}
\sup_{D^{\delta}_{T}} u\leq N\inf_{D^{\delta}_{T}} u
\end{equation*}
where $N=N\left(X,\mathnormal{diam}\Omega,T,m,\delta\right)$.
\label{EllipticHar}
\end{theorem}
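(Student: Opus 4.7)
The plan is to argue by compactness and contradiction, with the Carleson estimate providing the a priori bounds needed to extract an Arzelà–Ascoli limit, and with the strong minimum principle together with uniqueness for the Dirichlet–Cauchy problem yielding the final contradiction.

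Suppose the theorem fails. Then for each integer $k \geq 1$ there is a non-negative solution $u_k$ of $Lu_k = 0$ in $D_T$ vanishing continuously on $S_T$ with $\sup_{D^\delta_T} u_k / \inf_{D^\delta_T} u_k \to \infty$. After normalizing $\sup_{D^\delta_T} u_k = 1$, pick $(x_k, t_k), (y_k, s_k) \in \overline{D^\delta_T}$ with $u_k(x_k, t_k) \geq 1/2$ and $u_k(y_k, s_k) \to 0$, and by compactness of $\overline{D^\delta_T}$ extract convergent subsequences $(x_k, t_k) \to (x_\infty, t_\infty)$ and $(y_k, s_k) \to (y_\infty, s_\infty)$ in $\overline{D^\delta_T}$.

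The core step is to use the Carleson estimate to promote the a priori bound $u_k \leq 1$ on $D^\delta_T$ to a uniform bound on a compact set $K \subset D_T$ containing $D^\delta_T$ in its interior. Concretely, for each $(Q, s) \in S_T$ and each scale $r \in [M\delta, r_0)$ chosen so that the Carleson reference point $\bar{A}_r(Q, s)$ lies in $D^\delta_T$, the Carleson estimate gives $u_k(x, t) \leq C u_k(\bar{A}_r(Q, s)) \leq C$ on $\Psi_r(Q, s) \cap D_T$. A finite covering of a neighborhood of $D^\delta_T$ by such parabolic cylinders, combined with the Harnack inequality (Theorem \ref{Harnack1}) to transition between overlapping cylinders, yields the desired uniform bound on $K$. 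Together with Corollary \ref{HolBou} for H\"older behavior up to $S_T$ and the interior H\"older regularity coming from the Gaussian bounds, the family $\{u_k\}$ is equi-H\"older continuous on $K$, so Arzelà–Ascoli extracts a subsequence converging $u_k \to u_\infty$ uniformly, where $u_\infty \geq 0$ solves $Lu_\infty = 0$ in $D_T$, vanishes continuously on $S_T$, and satisfies $u_\infty(x_\infty, t_\infty) \geq 1/2$ while $u_\infty(y_\infty, s_\infty) = 0$ by continuity.

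Applying the strong minimum principle (the paper's strong maximum principle applied to $-u_\infty$) at the interior point $(y_\infty, s_\infty) \in \Omega \times (0, T]$, we obtain $u_\infty \equiv 0$ on $\overline{D}_{s_\infty}$. Uniqueness of the Dirichlet–Cauchy problem for $L$ on $\Omega \times (s_\infty, T)$ with zero initial data at $t = s_\infty$ and zero lateral data then forces $u_\infty \equiv 0$ on all of $D_T$, contradicting $u_\infty(x_\infty, t_\infty) \geq 1/2$. The main obstacle is propagating the $L^\infty$ bound off $D^\delta_T$ near the top time slice $t \to T$, where the Carleson reference point $\bar{A}_r(Q, s)$ at time $s + 2r^2$ may exceed $T$; this is handled by the adjoint Carleson estimate (valid for $L^*$-solutions and hence, via time-reversal, for $L$-solutions vanishing on $S_T$) in which $\underline{A}_r$ replaces $\bar{A}_r$, or alternatively by shrinking $r$ appropriately as $s \to T$.
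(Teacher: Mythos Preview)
Your approach via compactness and contradiction is genuinely different from the paper's direct argument. The paper simply bounds $u$ on the bottom slice $\Omega \times \{\delta^2/2\}$ by $N\,u(x_0,t_0)$---forward Harnack for interior points, Carleson plus Harnack for points within $\delta/4$ of $\partial\Omega$---and then applies the maximum principle on $\Omega \times (\delta^2/2, T]$, where $u$ vanishes laterally. This gives an explicit constant and no limit procedure.

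Your proof has a real gap in handling $t$ near $T$. The claim that time-reversal of the $L^*$-Carleson estimate yields a Carleson inequality for $L$-solutions with $\underline{A}_r$ in place of $\bar{A}_r$ is false: if $v(x,t)=u(x,T-t)$ solves $L^*v=0$ and one applies the $L^*$-Carleson bound $v(\xi,\tau)\leq C\,v(\underline{A}_r(Q,\sigma))$, the change of variables $t=T-\tau$, $s=T-\sigma$ returns exactly the forward $L$-Carleson $u(\xi,t)\leq C\,u(\bar{A}_r(Q,s))$. A genuine estimate $u\leq C\,u(\underline{A}_r)$ for $L$-solutions is essentially the backward Harnack inequality, which the paper proves \emph{using} Theorem~\ref{EllipticHar}, so invoking it here would be circular. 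The alternative of shrinking $r$ also fails: you need $r\geq M\delta$ to force $A_r(Q)\in\Omega^\delta$, so once $T-s<2M^2\delta^2$ no admissible $r$ remains.

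The gap is easy to close, but the fix is exactly the paper's key step: once the Carleson and Harnack arguments give a uniform bound $u_k\leq C$ on some slice $\Omega\times\{T'\}$ with $T'<T$, the maximum principle on $\Omega\times(T',T]$ (zero lateral data) propagates it to $t=T$. After that your compactness argument goes through. Note, however, that at this point the substantive content has become the paper's direct proof, and the compactness machinery adds length without new information.
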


\begin{proof}
By the continuity of u in $D_{T}^{\delta }$ there exist $\left(
x_{0},t_{0}\right) $ and $\left( x_{1},t{1}\right) $ such that 
\begin{equation*}
u\left( x_{0},t_{0}\right) =\min_{\bar{D}_{T}^{\delta }}u,\quad u\left(
x_{1},t_{1}\right) =\max_{\bar{D}_{T}^{\delta }}.
\end{equation*}
Set $D_{\delta ,T}^{\ast }=\Omega \times \left( \frac{\delta ^{2}}{2},T
\right] $. Since $D_{T}^{\delta }\subset \subset D_{\delta ,T}^{\ast }$ it
is enough to show that 
\begin{equation}
\max_{D_{\delta ,T}^{\ast }}u\leq Nu\left( x_{0},t_{0}\right)  \label{ellhar}
\end{equation}
for some constant $N$. Notice first that by the Harnack principle there is a
constant $N=N\left( X,\mathnormal{diam}\Omega ,T,m,\delta \right) $ such
that 
\begin{equation}
\max_{\Omega ^{\delta /4}\times {\delta ^{2}/2}}u\leq Nu\left(
x_{0},t_{0}\right) .  \label{ellHar}
\end{equation}%
For the points $x\in \Omega $ such that $\mathnormal{dist}\left( x,\partial
\Omega \right) \leq \delta /4$ we will use the Carleson estimate as follows.
Let $Q\in \partial \Omega $ and set $s=\delta ^{2}/2$. By the Carleson
estimate applied to the box $D_{\delta /2}\left( Q,s\right) $, we get that
for all $\left( x,t\right) \in D_{\delta /4}\left( Q,s\right) $, X
\begin{equation}
u\left( x,t\right) \leq N_{1}u\left( \bar{A}_{\delta /4}\left( Q,s\right)
\right) ,  \label{ellHar1}
\end{equation}%
where $N_{1}$ depends on $X,M,r_{0}$. Observe that $D_{\delta /2}\left(
Q,s\right) \subset D_{\delta /2,T}^{\ast }\setminus \bar{D}_{T}^{\delta }$,
and that $s+\delta ^{2}/4=3\delta ^{2}/4$. Hence we can apply the Harnack
inequality to get a constant $N_{2}=N_{2}\left( X,\mathnormal{diam}\Omega
,T,m,\delta \right) $ such that for all $\left( Q,s\right) \in S_{T}$, with $
s=\delta ^{2}/2$, 
\begin{equation}
u\left( \bar{A}_{\delta /4}\left( Q,s\right) \right) \leq N_{2}u\left(
x_{0},t_{0}\right) .  \label{ellHar2}
\end{equation}%
By (\ref{ellHar1}) and (\ref{ellHar2}) we get 
\begin{equation}
u\left( x,t\right) \leq N_{3}u\left( x_{0},t_{0}\right)  \label{ellHar3}
\end{equation}%
for the points $x\in \Omega $ such that $\mathnormal{dist}\left( x,\partial
\Omega \right) \leq \delta /4$. Since $u$ vanishes on $S_{T}$, by (\ref
{ellHar}), (\ref{ellHar3}) and the maximum principle we get (\ref{ellhar}).
\end{proof}
\qed

The following theorem is the backward Harnack inequality which is one of the main results of this paper.
\begin{theorem}
Let $\Omega $ be an $NTA$ domain with parameters $\left( M,r_{0}\right) $, $
u\geq 0$, Lu=0 in $D=\Omega \times \left( 0,\infty \right) $, $u\equiv 0$ in
S. Take $\left( Q,s\right) \in S$, $s\geq \delta _{0}^{2}$, $0<r<\frac{1}{2}
\min \left( r_{0},\delta _{0}\right) $. Then 
\begin{equation}
u\left( \bar{A}_{r}\left( Q,s\right) \right) \leq Nu\left( \underline{A}
_{r}\left( Q,s\right) \right)  \label{BackHar}
\end{equation}
with the constant $N=N\left( X,\mathnormal{diam}\Omega ,T,m,\delta
_{0},r_{0}\right) $
\end{theorem}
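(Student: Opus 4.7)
The plan is to adapt the Fabes--Safonov argument from \cite{FS}. The strategy is to route the comparison through corkscrew points at an intermediate, \emph{fixed} scale $R=\tfrac{1}{4}\min(r_0,\delta_0)$, where the elliptic-type Harnack inequality of Theorem \ref{EllipticHar} applies with constants depending only on the geometry.

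Since $\bar A_r(Q,s)\in\Psi_R(Q,s)$ for $r<R$, and $u\equiv 0$ on $\Delta_{2R}(Q,s)\subset S_T$, Carleson's estimate yields $u(\bar A_r(Q,s))\le C_1\,u(\bar A_R(Q,s))$. Both $\bar A_R(Q,s)$ and $\underline A_R(Q,s)$ lie in $D_{T_0}^{\delta_\ast}$ for a truncation time $T_0$ and a depth $\delta_\ast$ depending only on $\delta_0$, $r_0$, $M$, so Theorem \ref{EllipticHar} gives $u(\bar A_R(Q,s))\le N_2\,u(\underline A_R(Q,s))$. The proof is thus reduced to proving the \emph{fixed-to-small scale} bound
\[
u(\underline A_R(Q,s))\le N_3\,u(\underline A_r(Q,s))
\]
with $N_3$ uniform in $r\in(0,R)$.

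For this reduced bound, I would build a dyadic chain $r_j=2^{-j}R$ (with $J\asymp\log_2(R/r)$ levels) of corkscrew points $\underline A_{r_j}(Q,s)=(A_{r_j}(Q),s-2r_j^2)$. At each level the spatial separation, the time gap $2r_j^2-2r_{j+1}^2$, and the distances to $\partial\Omega$ are all comparable to $r_j$, so a single step of the growth Lemma \ref{Growth} gives $u(\underline A_{r_j})\le N_\ast\,u(\underline A_{r_{j+1}})$ with a uniform constant $N_\ast$. The main obstacle is that naive iteration produces a factor $N_\ast^J\asymp (R/r)^{\log_2 N_\ast}$ which depends on $r$. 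The Fabes--Safonov idea, which must be carried over here, is to interleave these chain steps with applications of the boundary H\"older decay (Corollary \ref{HolBou}) at each scale $r_j$: the decay factor $\theta<1$ per step, coming from the vanishing of $u$ on $\Delta_{2r_j}$ together with the non-degeneracy of caloric measure (Lemma \ref{NonVa}), cancels the geometric growth of $N_\ast^J$ once the contraction of scales is tuned correctly. This produces the required $r$-uniform $N_3$, and combining with the two preceding steps gives (\ref{BackHar}) with $N=C_1 N_2 N_3$ depending only on $X,\mathrm{diam}\,\Omega,T,m,\delta_0,r_0$.
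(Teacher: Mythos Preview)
Your reduction has a fatal flaw: the ``fixed-to-small scale'' inequality
\[
u(\underline A_R(Q,s))\le N_3\,u(\underline A_r(Q,s))\qquad\text{uniformly in }r\in(0,R)
\]
is simply false. Take the classical heat equation in a half-space and $u(x,t)=x_n$; then $u(\underline A_R)\asymp R$ while $u(\underline A_r)\asymp r$, so the ratio is $\asymp R/r\to\infty$. Thus no argument can salvage this step, and in particular the H\"older decay of Corollary~\ref{HolBou} cannot ``cancel'' the geometric growth: that corollary controls $\sup_{\Psi_{\rho/2}}u$ by $\theta\sup_{\Psi_\rho}u$, but the chain points $\underline A_{r_j}$ sit at distance $\asymp r_j$ from $\partial\Omega$ and are not in the inner box, so there is no contraction to harvest. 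What you describe is not the Fabes--Safonov mechanism.

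The actual Fabes--Safonov argument (and the paper's) is structurally different. One does \emph{not} fix the intermediate scale in advance; instead one sets $f(\rho)=\rho^{-\gamma}\sup_{\Psi_\rho^D}u$ (with $\gamma$ the exponent from the growth lemma) and picks the \emph{$u$-dependent} stopping scale $h=\max\{\rho\in[2r,\rho_0]:f(\rho)\ge f(2r)\}$. The very definition of $h$ makes the $(h/r)^\gamma$ loss from the chain lemma cancel against the gain $\sup_{\Psi_h^D}u\ge (h/2r)^\gamma\sup_{\Psi_{2r}^D}u$, reducing matters to $\sup_{\Psi_h^D}u\le N\,u(\underline A_h(Q,s))$. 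If $Kh>\rho_0$ this follows from Carleson plus Theorem~\ref{EllipticHar}. If $Kh\le\rho_0$ one uses that $f(Kh)<f(h)$ together with a representation of $u$ on $\partial_p\big(\Omega_{Kh}\times(s-4h^2,s+h^2)\big)$; the crucial new ingredient---completely absent from your sketch---is a Gaussian estimate showing the $L$-caloric measure of the ``far'' lateral piece is $\le NK^Qe^{-M(K-6)^2}$, which for $K$ large absorbs the $K^\gamma$ factor. The near-bottom piece is then handled by Carleson and Lemma~\ref{Growth}.
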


\begin{proof}
Let $v\left( x,t\right) =u\left( x,t-s+\delta _{0}^{2}\right) $. Then $
v\left( x,s\right) =u\left( x,\delta _{0}^{2}\right) $. Hence, we can reduce
the proof to the case $s=\delta _{0}^{2}$. Furthermore, we can assume that 
\begin{equation*}
Kr\leq \rho _{0}\equiv \frac{1}{2}\min \left( r_{0},\delta _{0}\right)
\end{equation*}
for some constant $K=K\left( X\right) >6$ that will be specified later. For $
\rho >0$, we define 
\begin{equation*}
\Psi _{\rho }^{D}=D\cap \Psi _{\rho }\left( Q,s\right) ,\quad f\left( \rho
\right) =\rho ^{-\gamma }\sup_{\Psi _{\rho }^{D}}u
\end{equation*}
where $\gamma =\gamma \left( X,m\right) $ is the constant of the previous
lemma. Take 
\begin{equation}
h=\max \left\{ \rho :2r\leq \rho \leq \rho _{0},\quad f\left( \rho \right)
\geq f\left( 2r\right) \right\} .  \label{BackHar1}
\end{equation}
Observe that $\bar{A}_{r}\left( Q,s\right) \in \Psi _{2r}^{D}$, hence 
\begin{equation*}
u\left( \bar{A}_{r}\left( Q,s\right) \right) \leq \sup_{\Psi _{2r}^{D}}u\leq
\left( 2r\right) ^{\gamma }h^{-\gamma }\sup_{D_{h}}u.
\end{equation*}
By the previous lemma, with $r=h$, we obtain 
\begin{equation*}
u\left( \underline{A}_{h}\left( Q,s\right) \right) \leq Nh^{\gamma }
\mathnormal{dist}\left( \underline{A}_{r}\left( Q,s\right) ,\partial \Omega
\right) ^{-\gamma }u\left( \underline{A}_{r}\left( Q,s\right) \right) ,
\end{equation*}
since $\underline{A}_{r}\left( Q,s\right) \in D_{h}$. Given that the
distance from $\underline{A}_{r}\left( Q,s\right) $ to the boundary of $
\Omega $ is proportional to $r$, we get 
\begin{equation}
u\left( \underline{A}_{h}\left( Q,s\right) \right) \leq Nh^{\gamma
}r^{-\gamma }u\left( \underline{A}_{r}\left( Q,s\right) \right) .
\end{equation}
If we can show that 
\begin{equation}
\sup_{\Psi _{h}^{D}}u\leq Nu\left( \underline{A}_{h}\left( Q,s\right)
\right) ,  \label{BackHar2}
\end{equation}
then we will obtain (\ref{BackHar}).\newline
We will divide the proof of (\ref{BackHar2}) in to cases: $Kh>\rho _{0}$ and 
$Kh\leq \rho _{0}$. Suppose that $Kh>\rho _{0}$. By Carleson estimate 
\begin{equation*}
\sup_{\Psi _{h}^{D}}u\leq Nu\left( \bar{A}_{h}\left( Q,s\right) \right) .
\end{equation*}
By the interior elliptic type Harnack inequality we obtain (\ref{BackHar}). 
\newline
Now, for the case $Kh\leq \rho _{0}$ we have 
\begin{equation}
\sup_{\Psi _{Kh}^{D}}u<K^{\gamma }\sup_{\Psi _{h}^{D}}u,  \label{BackHar3}
\end{equation}
since $f\left( Kh\right) <f\left( 2r\right) \leq f\left( h\right) $. Set 
\begin{equation*}
U\equiv \Omega _{Kh}\times \left( s-4h^{2},s+h^{2}\right) ,
\end{equation*}
then 
\begin{equation*}
\Psi _{h}^{D}\subset U\subset \Psi _{Kh}^{D}\subset D,
\end{equation*}
where $\Omega _{h}=\Omega \cap B_{d}\left( Q,h\right) $. Lets break the parabolic boundary of $U$ in three pieces $\Gamma_{0}$, $\Gamma_{1}$, and $\Gamma_{2}$ and write $u$ as 
\begin{equation*}
u\left( x,t\right) =\int_{\partial _{p}U}u\mathrm{d}\omega ^{\left(
x,t\right) }=\int_{\Gamma _{0}}u\mathrm{d}\omega ^{\left( x,t\right)
}+\int_{\Gamma _{1}}u\mathrm{d}\omega ^{\left( x,t\right) }+\int_{\Gamma
_{2}}u\mathrm{d}\omega ^{\left( x,t\right) },
\end{equation*}
where $\Gamma _{0}\equiv S\cap \partial _{p}U$, $\Gamma _{1}\equiv \Omega
_{\left( K-3\right) h}\times \left\{ s-4h^{2}\right\} $, and $\Gamma _{2}$
is the remaining part of $\partial _{p}U$. Given that $u$ vanishes in $
\Gamma _{0}$, 
\begin{equation}
\sup_{\Psi _{h}^{D}}u\leq \sup_{\Gamma _{1}}u+\sup_{\Psi _{h}^{D}}\omega
^{\left( x,t\right) }\left( \Gamma _{2}\right) \cdot \sup_{\Psi _{Kh}^{D}}u.
\label{BackHar4}
\end{equation}
Assume that 
\begin{equation}
\sup_{\Psi _{h}^{D}}\omega ^{\left( x,t\right) }\left( \Gamma _{2}\right)
\leq NK^{Q}e^{-M\left( K-6\right) ^{2}},
\label{BackHar5}
\end{equation}
hence, we have 
\begin{equation*}
\sup_{\Psi _{h}^{D}}u\leq \sup_{\Gamma _{1}}u+NK^{Q}e^{-M\left( K-6\right)
^{2}}K^{\gamma }\sup_{\Psi _{h}^{D}}u\leq \sup_{\Gamma _{1}}u+\frac{1}{2}
\sup_{\Psi _{h}^{D}}u,
\end{equation*}
for some $K=K\left( X\right) >6$, and hence 
\begin{equation*}
\sup_{\Psi _{h}^{D}}u\leq 2\sup_{\Gamma _{1}}u.
\end{equation*}
Now, we only have to show that 
\begin{equation}
\sup_{\Gamma _{1}}u\leq Nu\left( \underline{A}_{h}\left( Q,s\right) \right)
=u\left( A_{h}\left( Q\right) ,s-2h^{2}\right) .  \label{BackHar6}
\end{equation}
Choose $\left( x,s-4h^{2}\right) \in \Gamma _{1}$. If $\mathnormal{dist}
\left( x,\partial \Omega \right) <h$, take $z\in \partial \Omega $ such that 
$d\left(z,x\right)=d\left( x,\partial \Omega \right)$. Then, consider $\left( z,s-5h^{2}\right) $. Since 
\begin{equation*}
\left( x,s-4h^{2}\right) \in \bar{\Psi}_{h}^{D}\left( z,s-5h^{2}\right) ,
\end{equation*}
by the Carleson estimate we obtain 
\begin{equation*}
u\left( x,s-4h^{2}\right) \leq \sup_{\Psi _{h}^{D}\left( z,s-5h^{2}\right)
}u\leq Nu\left( A_{h}\left( z\right) ,s-3h^{2}\right) .
\end{equation*}
Observe that $\left( A_{h}\left( z\right) ,s-3h^{2}\right) \in U$. Now, $
d\left( A_{h}\left(z\right),A_{h}\left(Q\right)\right) <\left( K-1\right) h$. We can apply Lemma (\ref{Growth}) to function $u$, with $\theta=K-1$, $C=M\left(K-1\right)$, and $\epsilon=\frac{h}{M}$ to obtain 
\begin{equation*}
u\left( A_{h}\left( z\right) ,s-3h^{2}\right) \leq Nu\left( \underline{A}
_{h}\left( Q,s\right) \right) ,
\end{equation*}
where $N=N\left( X,M\right) $. Therefore, we have (\ref{BackHar6}) for dist$
\left( x,\partial \Omega \right) <h$. If $d\left( x,\partial \Omega \right)
>h,$ the estimate $u\left( x,t\right) \leq Nu\left( \underline{A}_{h}\left(
Q,s\right) \right) $ follows from the Harnack inequality.

Take $\left( x,t\right) \in \Psi _{h}^{D}$ and $\left( z,\tau \right) \in
\Gamma _{2}$, then $d\left( x,Q\right) <h$, $d\left( z,Q\right) \geq \left(
K-3\right) h$; hence $d\left( z,x\right) \geq \left( K-4\right) h$. For
proving (\ref{BackHar5}), we may assume that $s=4h^{2}$, so that 
\begin{equation*}
\Psi _{h}^{D}=\Omega _{h}\times \left( 3h^{2},5h^{2}\right) ,\quad \Gamma
_{2}\subset \left( \mathbb{R}^{n}\setminus B_{d}\left( Q,\left( K-3\right)
h\right) \right)
\end{equation*}
Assuming that $K>6$, we will compare $u\left( x,t\right) \equiv \omega
^{\left( x,t\right) }\left( \Gamma _{2}\right) $ with the solution $v\left(
x,t\right) $ of the problem 
\begin{equation*}
Lv=0\quad \mathnormal{in}\quad \mathrm{R}^{n}\times \left( 0,5h^{2}\right)
,\quad v\left( x,0\right) =\chi _{\left\{ Kh\geq d\left( x,Q\right) \geq
\left( K-5\right) h\right\} }\left( x\right) .
\end{equation*}
By using the fundamental solution $p\left( x,t;y,s\right) $, we can write 
\begin{equation*}
v\left( x,t\right) =\int_{\left\{ Kh\geq d\left( y,Q\right) \geq \left(
K-5\right) h\right\} }{p\left( x,t;y,0\right) }\mathrm{d}y.
\end{equation*}
If $d\left( x,Q\right) =\left( K-4\right) h$, $0<t<5h^{2}$, we have $
B_{d}\left( x,h\right) \subset \left\{ Kh\geq d\left( y,Q\right) \geq \left(
K-5\right) h\right\} $ and 
\begin{equation*}
v\left( x,t\right) \geq \int_{B_{d}\left( x,h\right) }{p\left(
x,t;y,0\right) }\mathrm{d}y.
\end{equation*}
Suppose that $\sqrt{t}<h$. Then, 
\begin{align}
v\left( x,t\right) & \geq \frac{C^{-1}}{\left\vert B_{d}\left( x,\sqrt{t}
\right) \right\vert }\int_{B_{d}\left( x,h\right) }{\exp \left( \frac{
-M^{-1}d\left( x,y\right) ^{2}}{t}\right) }\mathrm{d}y  \notag \\
& \geq \frac{C^{-1}}{\left\vert B_{d}\left( x,\sqrt{t}\right) \right\vert }
\int_{B_{d}\left( x,\sqrt{t}\right) }{\exp \left( \frac{-M^{-1}d\left(
x,y\right) ^{2}}{t}\right) }\mathrm{d}y  \notag \\
& \geq N.
\end{align}
For $\sqrt{t}\geq h$, we have 
\begin{align}
v\left( x,t\right) & \geq \frac{C^{-1}}{\left\vert B_{d}\left( x,\sqrt{t}
\right) \right\vert }\int_{B_{d}\left( x,h\right) }{\exp \left( \frac{
-M^{-1}d\left( x,y\right) ^{2}}{t}\right) }\mathrm{d}y  \notag \\
& \geq \frac{\left\vert B_{d}\left( x,h\right) \right\vert }{\left\vert
B_{d}\left( x,\sqrt{t}\right) \right\vert }  \notag \\
& \geq N\frac{\left\vert B_{d}\left( x,h\right) \right\vert }{\left\vert
B_{d}\left( x,\sqrt{5}h\right) \right\vert }\geq N,
\end{align}
since $t\in \left( 0,5h^{2}\right) $. Hence, $v\left( x,t\right) \geq N$ for 
$d\left( x,Q\right) =\left( K-4\right) h$ , with $N=N\left( X\right) $. This
implies that $Nv\geq 1\geq u$ on $\left( \Omega \cap \partial B_{\left(
K-4\right) h}\right) \times \left[ 0,5h^{2}\right] ,$ and $Nv\geq 0=u$ on
the remaining part of the parabolic boundary of $\Omega _{\left( K-4\right)
h}\times \left( 0,5h^{2}\right) .$ Since both functions $u$ and $Nv$ satisfy
the same equation $Lu=0$ in $\Omega _{\left( K-4\right) h}\times \left(
0,5h^{2}\right) \supset \Psi _{h}^{D},$ for arbitrary $X=\left( x,t\right)
\in \Psi _{h}^{D}$ we get
\begin{eqnarray*}
u\left( x,t\right) &\leq &Nv\left( x,t\right) =N\int_{\left\{ Kh\geq d\left(
0,y\right) \geq \left( K-5\right) h\right\} }p\left( x,t;y,0\right) dy \\
&\leq &\frac{NC}{\left\vert B_{d}\left( x,\sqrt{3}h\right) \right\vert }
\int_{\left\{ Kh\geq d\left( 0,y\right) \geq \left( K-5\right) h\right\}
}\exp \left( -\frac{Md\left( x,y\right) ^{2}}{5h^{2}}\right) \\
&\leq &\frac{NC}{\left\vert B_{d}\left( x,\sqrt{3}h\right) \right\vert }
\int_{\left\{ 2Kh\geq d\left( x,y\right) \geq \left( K-6\right) h\right\}
}\exp \left( -\frac{Md\left( x,y\right) ^{2}}{5h^{2}}\right)
\end{eqnarray*}
Hence,
\begin{equation*}
u\left( x,t\right) \leq NK^{Q}\exp \left( -M\left( K-6\right) ^{2}\right) .
\end{equation*}
\end{proof}
\qed

The doubling property of the $L-caloric$ measure is a direct consequence of the backward Harnack inequality and
Theorem \ref{DahlbergTheo}. 

\begin{theorem}
There exist a positive constant $C=(X,M,r_{0},\text{diam}$ $\Omega ,T)$
such that for all $(Q,s)\in \partial _{p}D_{T}$ and $0<r\leq \frac{1}{2}\min
\left\{ r_{0},\sqrt{T-s},\sqrt{s}\right\} $ we have  
\begin{equation}
\omega ^{(x,t)}(\Delta _{2r}(Q,s))\leq C\omega ^{(x,t)}(\Delta _{r}(Q,s))
\end{equation}%
with $d\left( x,Q\right) \leq K\left\vert t-s\right\vert ^{1/2}$ and $
\left\vert t-s\right\vert \geq 16r^{2}.$
\end{theorem}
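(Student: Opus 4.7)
The strategy is to sandwich $\omega^{(x,t)}(\Delta_{2r}(Q,s))$ and $\omega^{(x,t)}(\Delta_r(Q,s))$ between Green function values via Theorem \ref{DahlbergTheo}, then use doubling of metric balls together with a backward Harnack inequality applied to the Green function in its pole variables to close the resulting gap in time.

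\textbf{Step 1 (Dahlberg estimates).} Under the standing hypothesis $|t-s|\geq 16r^{2}$ with $(x,t)\in D$ lying in the forward cone from $(Q,s)$, the points $\bar{A}_{2a^2r}(Q,s)$ and $\underline{A}_{4a^2r}(Q,s)$ both lie strictly below $(x,t)$ in time, so Theorem \ref{DahlbergTheo} applies at both scales. It gives
\begin{equation*}
\omega^{(x,t)}\bigl(\Delta_{2r}(Q,s)\bigr)\leq C\,|B_{d}(Q,2r)|\,G\bigl(x,t;\underline{A}_{4a^{2}r}(Q,s)\bigr),
\end{equation*}
\begin{equation*}
\omega^{(x,t)}\bigl(\Delta_{r}(Q,s)\bigr)\geq C^{-1}\,|B_{d}(Q,r)|\,G\bigl(x,t;\bar{A}_{2a^{2}r}(Q,s)\bigr).
\end{equation*}
By the doubling inequality \eqref{doubling} we have $|B_{d}(Q,2r)|\leq C_{1}|B_{d}(Q,r)|$, so the whole problem reduces to showing
\begin{equation*}
G\bigl(x,t;\underline{A}_{4a^{2}r}(Q,s)\bigr)\leq N\,G\bigl(x,t;\bar{A}_{2a^{2}r}(Q,s)\bigr).
\end{equation*}

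\textbf{Step 2 (BHI for the adjoint).} Fix $(x,t)$ and consider $v(\xi,\tau):=G(x,t;\xi,\tau)$. As a function of its second pair of arguments, $v$ is a nonnegative solution of $L^{\ast}v=0$ in $D\setminus\{(x,t)\}$ that continuously vanishes on the lateral boundary $S$. Because $L^{\ast}$ is a parabolic operator of the same sub-Riemannian type with reversed time, every result proved for $L$ has an analogue for $L^{\ast}$; in particular the backward Harnack inequality of the previous theorem holds for $v$ with the roles of $\bar{A}_{r}$ and $\underline{A}_{r}$ interchanged. Applying it at scale $\rho=4a^{2}r$ yields
\begin{equation*}
G\bigl(x,t;\underline{A}_{4a^{2}r}(Q,s)\bigr)\leq N_{1}\,G\bigl(x,t;\bar{A}_{4a^{2}r}(Q,s)\bigr).
\end{equation*}

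\textbf{Step 3 (rescaling via Harnack).} It remains to bridge the two forward-in-time base points $\bar{A}_{4a^{2}r}(Q,s)$ and $\bar{A}_{2a^{2}r}(Q,s)$; both lie at interior distance comparable to $r$ from $\partial\Omega$, are separated by distance $\lesssim r$ in the Carnot--Carath\'eodory metric, and are separated by a time of order $r^{2}$. Hence by the Harnack chain condition of the $NTA$ domain together with the interior Harnack inequality (Theorem \ref{Harnack1}) applied to $v=G(x,t;\cdot,\cdot)$, we obtain
\begin{equation*}
G\bigl(x,t;\bar{A}_{4a^{2}r}(Q,s)\bigr)\leq N_{2}\,G\bigl(x,t;\bar{A}_{2a^{2}r}(Q,s)\bigr).
\end{equation*}
Chaining Steps 1--3 delivers the doubling inequality with $C=C^{2}C_{1}N_{1}N_{2}$.

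\textbf{Anticipated obstacle.} The only non-cosmetic point is the legitimacy of invoking the backward Harnack inequality for $L^{\ast}$ rather than $L$: one must verify that the BHI proof of Section \ref{sec 04} does not use any structural feature of $L$ that fails for $L^{\ast}$. Since its ingredients (Gaussian bounds, Carleson estimate, interior Harnack, the growth lemma for $\underline{A}_{r}$) are all symmetric between $L$ and $L^{\ast}$ in the sub-Riemannian setting considered here, this transfer is routine. A secondary minor check is that the precise quantitative hypotheses $d(x,Q)\leq K|t-s|^{1/2}$ and $|t-s|\geq 16r^{2}$ are exactly what is needed to legitimize both Dahlberg estimates simultaneously, which follows from tracing constants through Theorem \ref{DahlbergTheo}.
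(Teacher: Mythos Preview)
Your proposal is correct and follows exactly the route the paper indicates: the paper states only that the doubling property ``is a direct consequence of the backward Harnack inequality and Theorem~\ref{DahlbergTheo},'' and you have supplied precisely those details---Dahlberg at scales $r$ and $2r$, doubling of the metric balls, BHI for $G(x,t;\cdot,\cdot)$ viewed as an $L^{\ast}$-solution, and an interior Harnack step to align the corkscrew points. The only point worth polishing is to make explicit that $G(x,t;\cdot,\cdot)$, after the time reversal $\tau\mapsto t-\tau$, becomes a nonnegative solution on a full cylinder $\Omega\times(0,\infty)$ vanishing on the lateral boundary, so the global hypotheses of the BHI are genuinely met; you allude to this in your ``anticipated obstacle'' paragraph, and it is indeed routine.
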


\section{Local and Global Comparison Theorem}
\label{sec 05}

From now on, we assume that $M>100$ and $M>a$, where $a$ is as in (\ref{X-balls})  For $Q\in \partial
\Omega $ and $0<r<\frac{r_{0}}{M}$ we cover the set
\begin{equation*}
F=\partial \Omega \cap \overline{B_{d}\left( Q,\frac{3M}{4}r\right)
\setminus B_{d}\left( Q,\frac{M}{4}r\right) }
\end{equation*}%
by $N_{0}$ balls $B_{d}\left( Q_{i},\frac{r}{T}\right) ,$ with $Q_{i}\in F$
and $l\geq 2$ suitably chosen.\ The balls $B_{d}\left( Q_{i},\frac{r}{T}
\right) $ can be taken so that $B_{d}\left( Q_{i},\frac{r}{100T}\right) $
will be disjoint. This fact, the interior corkscrew condition and the doubling property implies
that the number $N_{0}$ is independent of $r.$ In similar way, we can cover $
\left( s-\frac{M^{2}r^{2}}{4},s+\frac{M^{2}r^{2}}{4}\right) $ by $%
N_{1}=N_{1}\left( l\right) $ intervals $I_{\frac{r}{l}}\left( s_{i}\right) $
of length $\frac{r^{2}}{l^{2}}.$ Set 
\begin{equation*}
H\left( x,t\right) =\sum_{j=1}^{N_{1}}\sum_{i=1}^{N_{0}}\omega ^{\left(
x,t\right) }\left( \Delta _{r}\left( Q_{i},s_{j}\right) \right) +\left\vert
B_{d}\left( x,r\right) \right\vert G\left( x,t;A_{Mr}\left( Q\right)
,s-4M^{2}r^{2}\right)
\end{equation*}

\begin{lemma}
For $l$ sufficiently large, there exists a constant $C=C\left( M\right) >0$
such that for $\left( x,t\right) \in D_{T}\cap \partial _{p}\Psi _{\frac{Mr}{
2}}\left( Q,s\right)$ 
\begin{equation*}
H\left( x,t\right) \geq C.
\end{equation*}
\end{lemma}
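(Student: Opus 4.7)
The plan is a two-case analysis based on $d(x,\partial\Omega)$. For points near $\partial\Omega$ one of the caloric-measure terms is nondegenerate by Lemma \ref{NonVa}; for points away from $\partial\Omega$ the Green-function term is bounded below by a parabolic Harnack chain back to an interior reference point. Write $A := A_{Mr}(Q)$ and $\tau_0 := s - 4M^2r^2$, and fix $\eta = \eta(M) > 0$ small, to be chosen after $l$.

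\emph{Boundary regime.} Assume $d(x,\partial\Omega) \leq \eta r$ and let $z \in \partial\Omega$ be a nearest point to $x$. On the lateral face $\{d(x,Q) = Mr/2\}$ of $\partial_p\Psi_{Mr/2}(Q,s)$ the triangle inequality puts $d(z,Q) \in [Mr/2 - \eta r,\, Mr/2 + \eta r] \subset [Mr/4,\, 3Mr/4]$, and the same holds on the bottom face provided $d(x,Q) \geq Mr/4 + \eta r$, so in either situation $z \in F$. Thus $z \in B_d(Q_i, r/l)$ for some $i$, and $t \in I_{r/l}(s_j)$ for some $j$. Choosing $l$ large with $\eta + 1/l < 1/2$ and $1/l^2 < 1/4$, the triangle inequality gives $d(x,Q_i) < r/2$ and $|t-s_j| < r^2/4$, so $(x,t) \in \Psi_{r/2}(Q_i, s_j) \cap D_T$. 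Applying Lemma \ref{NonVa} at scale $r/2$ yields
\[
\omega^{(x,t)}(\Delta_r(Q_i, s_j)) \geq c_1(M) > 0,
\]
and hence $H(x,t) \geq c_1$.

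\emph{Interior regime.} Now assume $d(x,\partial\Omega) > \eta r$, and take $P_0 := (A, s - 2M^2r^2)$ as reference. The corkscrew condition gives $d(A,\partial\Omega) \geq r$, so the sub-cylinder $B_d(A, r/2) \times (\tau_0 - r^2,\, \tau_0 + 3M^2r^2)$ fits inside $D_T$; a maximum-principle comparison of the Green function of $D_T$ with the Green function of this sub-cylinder, combined with the Gaussian lower bound on the fundamental solution at the time gap $2M^2r^2$, gives
\[
G(P_0; A, \tau_0) \geq \frac{c_2}{|B_d(A, Mr)|}.
\]
Since $s - 2M^2r^2 < s - M^2r^2/4 \leq t$ and $d(x,A) \leq d(x,Q) + d(Q,A) \leq 3Mr/2$, the Harnack chain property of the NTA domain $\Omega$ produces a parabolic Harnack chain from $P_0$ to $(x,t)$ inside $D_T \setminus \{(A,\tau_0)\}$ of length $N = N(M, \eta, M_{\rm NTA})$, each ball at distance $\gtrsim \eta r/M$ from $\partial\Omega$. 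Iterating Theorem \ref{Harnack1} along the chain yields $G(x,t; A, \tau_0) \geq c_3\, G(P_0; A, \tau_0)$, and the doubling property (\ref{doubling}) together with $d(x,A) \leq 3Mr/2$ upgrades this to $|B_d(x,r)|\, G(x,t; A, \tau_0) \geq c_4(M) > 0$, so $H(x,t) \geq c_4$.

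The principal obstacle is the Green-function lower bound at $P_0$: the PWB correction $p - G$ at $P_0$ equals $\int_{\partial_p D_T} p(\,\cdot\,; A, \tau_0)\, d\omega^{(P_0)}$, and one must show this correction absorbs at most a controlled fraction of $p(P_0; A, \tau_0)$. The maximum-principle comparison with a sub-cylinder deep inside $D_T$ is the cleanest route; an alternative is to invoke Theorem \ref{DahlbergTheo} and transport the resulting estimate via the backward Harnack inequality proved in the preceding section. The only remaining delicate configuration is the bottom face with $d(x,Q) < Mr/4 + \eta r$ and $d(x,\partial\Omega) < \eta r$ simultaneously: neither regime applies directly, and handling this corner requires additional care, for instance by adjoining a few caloric-measure terms centered near $(Q, s - M^2r^2/4)$ or by applying Lemma \ref{NonVa} with a localized version of the shell $F$.
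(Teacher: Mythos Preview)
Your two-case split based on $d(x,\partial\Omega)$, with Lemma~\ref{NonVa} driving the boundary regime, is exactly the skeleton of the paper's proof. The difference lies in the interior regime. The paper does not build a forward Harnack chain from a reference point $P_0$; instead it works in the \emph{pole} variable of the Green function. First it applies the adjoint Harnack inequality to $G(x,t;\,\cdot\,,\,\cdot\,)$ to shift the pole time from $s-4M^2r^2$ to $s-2M^2r^2$; then it invokes the backward Harnack inequality of Section~\ref{sec 04} in the first variable to push $t$ forward by $5M^2r^2$; finally it combines Theorem~\ref{DahlbergTheo} with Lemma~\ref{Growth} to dominate the result from below by $|B_d(x,r)|^{-1}$ times a caloric measure evaluated at a fixed interior point, and closes with Lemma~\ref{NonVa}. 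This is precisely the ``alternative'' you sketch in your penultimate paragraph. Your primary route---sub-cylinder comparison at $P_0$ plus a parabolic Harnack chain---is more elementary in that it avoids both the backward Harnack inequality and Dahlberg's theorem, but you must track the time budget along the chain carefully (this is essentially Lemma~\ref{Growth}); the paper trades that bookkeeping for the heavier machinery already developed in Section~\ref{sec 04}.

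Your flagged corner case (bottom face, $d(x,Q)<Mr/4+\eta r$, $d(x,\partial\Omega)<\eta r$) is a genuine loose end, and it is worth noting that the paper's written proof does not address it either: the dichotomy there is stated only for $x\in\Omega\cap\partial B_d(Q,Mr/2)$, i.e.\ the lateral face. At such corner points the nearest boundary point lies outside the annulus $F$, so none of the $\omega^{(x,t)}(\Delta_r(Q_i,s_j))$ is forced to be large, and the Green-function term degenerates as $x\to\partial\Omega$; so $H$ as defined need not be bounded below there. Your proposed patch of adjoining a finite number of extra caloric-measure terms $\omega^{(x,t)}(\Delta_r(Q,s_j))$ centered at $Q$ itself (for the same collection of times $s_j$) is the natural fix and costs nothing downstream, since it only enlarges $H$ by finitely many caloric functions and the argument in Theorem~\ref{localcomp} goes through unchanged.
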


\begin{proof}
By Lemma \ref{NonVa} and Corollary \ref{HolBou} we obtain for $\left(
x,t\right) \in \Psi _{\frac{2r}{l}}\left( Q_{i},s_{i}\right) \cap D_{T}$ 
\begin{equation*}
\omega ^{\left( x,t\right) }\left( \Delta _{r}\left( Q_{i},s_{i}\right)
\right) \geq \frac{1}{2}.
\end{equation*}
Let $l=l\left( M\right) $ be the smallest positive number for which the
above estimate holds for every $i\in \left\{ 1,...,N_{0}\right\} $ and $
j=\left\{ 1,...,N_{1}\right\} .$ We have thus proved the estimate when $x\in
\cup _{i=1}^{N_{0}}B_{d}\left( Q_{i},\frac{2r}{l}\right) .$ It is not difficult to see that if 
\begin{equation*}
x\in A\overset{\mathrm{def}}{=}\Omega \cap \partial B_{d}\left( Q,\frac{Mr}{2
}\right) \setminus \cup _{i=1}^{N_{0}}B_{d}\left( Q_{i},\frac{2r}{l}\right) ,
\end{equation*}
then 
\begin{equation*}
d\left( x,\partial \Omega \right) \geq \frac{r}{l}.
\end{equation*}
Now, by the Harnack inequality applied to the function $G\left( x,t;\text{
\textperiodcentered },\text{\textperiodcentered }\right) $ we obtain
\begin{equation*}
G\left( x,t;A_{Mr}\left( Q\right) ,s-4M^{2}r^{2}\right) \geq CG\left(
x,t;A_{Mr}\left( Q\right) ,s-2M^{2}r^{2}\right) .
\end{equation*}
The boundary backward Harnack inequality gives
\begin{equation*}
G\left( x,t;A_{Mr}\left( Q\right) ,s-2M^{2}r^{2}\right) \geq C_{M}G\left(
x,t+5M^{2}r^{2};A_{Mr}\left( Q\right) ,s-2M^{2}r^{2}\right)
\end{equation*}
since $r$ is small enough. By Theorem (\ref{DahlbergTheo}) and Lemma \ref{Growth} we obtain
\begin{equation*}
G\left( x,t;A_{Mr}\left( Q\right) ,s-4M^{2}r^{2}\right) \geq \frac{C}{
\left\vert B_{d}\left( x,r\right) \right\vert }\omega ^{\left( A_{\frac{r}{2}
}\left( Q\right) ,s+4M^{2}r^{2}\right) }\left( \Delta _{Mr}\left( Q,s\right)
\right) .
\end{equation*}
The conclusion follows from Lemma \ref{NonVa}.
\end{proof}
\qed
\begin{theorem}
Let $(Q,s)\in S_{T}$ and $u,v$ be two nonnegative solutions of $Lu=0$ in $
\Psi _{Mr}^{D}(Q,s)$ vanishing continuously on $\Delta _{Mr}(Q,s).$ Then
there exists a constant $C=C(X,M,r_{0})$ such that for $r<\frac{1}{M}\min
(r_{0},\sqrt{s},\sqrt{T-s}),$ and $(x,t)\in \Psi _{\frac{r}{4aM}}^{D}$ we
have 
\begin{equation}
\frac{u(x,t)}{v(x,t)}\leq C\frac{u(A_{Mr}(Q),s+4M^{2}r^{2})}{%
v(A_{Mr}(Q),s-4M^{2}r^{2})}
\end{equation}
\label{localcomp}
\end{theorem}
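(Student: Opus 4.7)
The plan is to sandwich both $u$ and $v$ by constant multiples of the auxiliary function $H$ introduced above, in the spirit of Lewis' device that was adapted in \cite{CG1}. Set $A^+ := (A_{Mr}(Q), s+4M^2r^2)$ and $A^- := (A_{Mr}(Q), s-4M^2r^2)$; note that $A^-$ is the pole of the Green-function summand of $H$. The goal is to establish two pointwise bounds,
\begin{equation*}
u(x,t) \leq C_1\, u(A^+)\, H(x,t) \quad \text{on } \Psi^D_{Mr/2}(Q,s), \tag{UB}
\end{equation*}
\begin{equation*}
v(x,t) \geq c_2\, v(A^-)\, H(x,t) \quad \text{on } \Psi^D_{r/(4aM)}(Q,s), \tag{LB}
\end{equation*}
after which dividing (UB) by (LB) yields the theorem with $C = C_1/c_2$.

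For (UB) the argument is a maximum principle on the box $\Psi^D_{Mr/2}(Q,s)$. Both sides are nonnegative and $L$-caloric in the interior, because the pole $A^-$ of the Green-function summand of $H$ lies strictly below the box in time, and the ball-volume weight $|B_d(\cdot,r)|$ is harmless for $x$ confined to a ball of radius $\lesssim Mr$ by the doubling property (\ref{doubling}). On the lateral piece $\Delta_{Mr/2}(Q,s)$ of the parabolic boundary, $u \equiv 0$ while $H \geq 0$; on the remaining portion of $\partial_p \Psi^D_{Mr/2}(Q,s)$, the preceding lemma gives $H \geq C_0 > 0$, while the Carleson estimate applied in $\Psi^D_{Mr}(Q,s)$ followed by a Harnack chain of length bounded by the NTA parameters joining $\bar{A}_{Mr/2}(Q,s)$ to $A^+$ delivers $u \leq C\, u(A^+)$. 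Taking $C_1 = C/C_0$ closes the maximum principle.

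For (LB) the strategy is to convert each caloric-measure summand of $H$ into a Green-function summand via the elliptic-type Dahlberg bound (Theorem~\ref{Dahlberg-elliptic}). Each $Q_i$ lies in the annulus $\overline{B_d(Q, 3Mr/4)\setminus B_d(Q, Mr/4)}$, and every observation point $(x,t)$ lies in the much smaller box $\Psi^D_{r/(4aM)}(Q,s)$, so the hypothesis $x \in \Omega \setminus B_d(Q_i, ar)$ required by Theorem~\ref{Dahlberg-elliptic} is automatic once $M$ is taken large enough. This gives
\begin{equation*}
\omega^{(x,t)}\bigl(\Delta_r(Q_i, s_j)\bigr) \leq C\, |B_d(Q_i, r)|\, G\bigl(x,t; \underline{A}_{2a^2r}(Q_i, s_j)\bigr).
\end{equation*}
It therefore suffices to show, for every pole $P \in \{\underline{A}_{2a^2r}(Q_i, s_j)\}_{i,j}\cup \{A^-\}$ and the associated constant ball-volume weight $W_P$, that
\begin{equation*}
W_P\, G(x,t; P) \leq C\, \frac{v(x,t)}{v(A^-)} \quad \text{on } \Psi^D_{r/(4aM)}(Q,s). \tag{$\star$}
\end{equation*}
My proposed route to $(\star)$ is to fix a bridge point $Z_P$ (for instance $\bar{A}_{r/2}(Q_i, s_j)$ or $\bar{A}_{Mr}(Q,s)$, depending on the summand) that stays at distance $\gtrsim r$ both from $\partial \Omega$ and from $P$. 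The fundamental-solution size bound combined with Dahlberg in reverse gives $W_P\, G(Z_P; P) \leq C$, while the NTA Harnack chain of Proposition~\ref{UseChain} joining $A^-$ to $Z_P$ (along balls of diameter comparable to $r$ and of count depending only on $M$) forces $v(Z_P) \geq c\, v(A^-)$. A final maximum-principle comparison extends the pointwise inequality at $Z_P$ to all of $\Psi^D_{r/(4aM)}(Q,s)$, using that both $W_P\, G(\cdot, \cdot; P)$ and $v$ vanish continuously on $\Delta_{Mr}(Q,s)$ and are $L$-caloric in the observation box, together with the global Carleson bound in Theorem~\ref{VarCarThe} applied to $G(\cdot, \cdot; P)$.

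The main obstacle is the extension step in $(\star)$. Because $H$ itself does not vanish on the lateral boundary, one cannot run the max principle on $v - c\, v(A^-)\, H$ directly on the scale $Mr/2$; the descent to the smaller box $\Psi^D_{r/(4aM)}$ is essential so that every observation point stays uniformly separated from every pole $P$ in the sum. Simultaneously, one must verify that the Harnack chain to each bridge $Z_P$ has length and geometry controlled only by $(X, M, r_0)$ and not by $r$, which is precisely the content of Proposition~\ref{UseChain}. The bookkeeping hypothesis $r < \tfrac{1}{M}\min(r_0, \sqrt{s}, \sqrt{T-s})$ guarantees that every pole $P$ and every bridge $Z_P$ remains inside $D_T$, keeping every Green-function and Harnack invocation legitimate.
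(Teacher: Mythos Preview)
Your (UB) is essentially the paper's argument: Carleson plus the lower bound $H\geq C$ on $D_T\cap\partial_p\Psi_{Mr/2}$, then a maximum principle. The paper makes the ``harmless by doubling'' remark precise by replacing $|B_d(x,r)|$ with the constant $|B_d(Q,r)|$ to obtain an $L$-superparabolic $H^{\ast}\sim H$; your phrasing amounts to the same thing.

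The gap is in (LB). Having the inequality $W_P\,G(Z_P;P)\,v(A^-)\leq C\,v(Z_P)$ at a single interior bridge point $Z_P$ does not propagate to the whole box $\Psi^D_{r/(4aM)}$ by any maximum principle: a comparison principle requires the inequality on the full parabolic boundary of some region, not at one interior point. Invoking Theorem~\ref{VarCarThe} does not rescue this, since $G(\cdot\,;P)$ is not an $L$-solution on all of $D_T$ (it has a pole at $P$) and in any case that theorem gives only an upper bound for the function with concentrated boundary data, whereas what you need is a \emph{lower} bound for $v$ throughout the observation box---which is essentially the very comparison theorem you are proving.

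The paper organizes (LB) differently and this reorganization is the substance of the argument. First it shows, by the adjoint Harnack inequality applied to $G(x,t;\cdot)$ together with Proposition~\ref{UseChain} (whose role is to produce a Harnack chain in the \emph{second} variable that avoids the singularity at $x$), that every summand $G(x,t;\underline{A}_{2a^2r}(Q_i,s_j))$ is dominated by $G(x,t;A^-)$ on the small box; combined with Theorem~\ref{Dahlberg-elliptic} this collapses $H$ to a constant multiple of the single Green term $|B_d(x,r)|\,G(x,t;A^-)$. Second, it compares that one Green term to $v$ by a genuine maximum-principle argument: on the boundary of a small cylinder $\Phi_r$ surrounding $A^-$ one has the Gaussian upper bound $|B_d(x,r)|\,G(\cdot\,;A^-)\leq C$ and the forward Harnack lower bound $v\geq c\,v(A^-)$, and both functions vanish on $\Delta_{Mr}$; this yields $(\star)$ for all later times. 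Note that Proposition~\ref{UseChain} is used for the Green function in its second argument, not for chaining $v$; ordinary Lemma~\ref{Growth} suffices for the latter since no point needs to be avoided.
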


\begin{proof}
By the Carleson estimate and the previous lemma 
\begin{equation}
u\left( x,t\right) \leq Cu\left( \overline{A}_{Mr}\left( Q,s\right) \right)
H\left( x,t\right)  \label{LC1}
\end{equation}
for $\left( x,t\right) \in D_{T}\cap \partial _{p}\Psi _{\frac{Mr}{2}}\left(
Q,s\right)$. Let 
\begin{displaymath}
H^{*}\left( x,t\right) =\sum_{j=1}^{N_{1}}\sum_{i=1}^{N_{0}}\omega ^{\left(
x,t\right) }\left( \Delta _{r}\left( Q_{i},s_{j}\right) \right) +\left\vert
B_{d}\left( Q,r\right) \right\vert G\left( x,t;A_{Mr}\left( Q\right)
,s-4M^{2}r^{2}\right)
\end{displaymath}
Observe that for every $x\in D_{T}\cap \Psi _{\frac{Mr}{2}}\left( Q,s\right)$
\begin{equation}
C^{-1}H\left(x,t\right)\geq H^{*}\left(x,t\right) \geq H\left(x,t\right)
\label{LCL} 
\end{equation}
where $C>0$ depends on the constant in (\ref{doubling}). Since $H^{*}$ is $L$-superparabolic in $x\in D_{T}\cap \Psi _{\frac{Mr}{2}}\left( Q,s\right)$, by (\ref{LC1}) and (\ref{LCL}),  
we can conclude that (\ref{LC1}) holds in $
D_{T}\cap \Psi _{\frac{Mr}{2}}\left( Q,s\right) .$ On the other hand, set $
Q_{r}=A_{Mr}\left( Q\right) $, $s_{r}=s-4M^{2}r^{2}.$ Then, there is a $
\delta =\delta \left( M\right) >0$ such that
\begin{equation*}
\Phi _{r}=\left\{ \left( x,t\right) :d\left( x,Q_{r}\right) <\delta r,s_{r}+
\frac{\delta ^{2}r^{2}}{2}<t<s_{r}+\delta ^{2}r^{2}\right\}
\end{equation*}
is contained in $D_{T}$ and $s_{r}+\delta ^{2}r^{2}<s-\frac{r^{2}}{4M^{2}}.$
For $\left( x,t\right) \in \partial \Phi _{r},$ the Gaussian bounds imply
\begin{equation*}
\left\vert B_{d}\left( x,r\right) \right\vert G\left( x,t;A_{Mr}\left(
Q\right) ,s-4M^{2}r^{2}\right) \leq C\left( M\right)
\end{equation*}
Furthermore, by the Harnack inequality and the Harnack chain condition,
\begin{equation*}
v\left( A_{Mr}\left( Q\right) ,s-4M^{2}r^{2}\right) \leq Cv\left( x,t\right)
\end{equation*}
for $\left( x,t\right) \in \partial \Phi _{r}.$ Hence, for $t\geq s-\frac{
r^{2}}{4M^{2}},$
\begin{equation}
\left\vert B_{d}\left( x,r\right) \right\vert G\left( x,t;\underline{A}%
_{Mr}\left( Q,s\right) \right) v\left( \underline{A}_{Mr}\left( Q,s\right)
\right) \leq Cv\left( x,t\right)  \label{LC2}
\end{equation}

In order to finish the proof we need to prove that 
\begin{equation*}
H(x,t)\leq C\left\vert B_{d}\left( x,r\right) \right\vert G\left(
x,t;A_{Mr}\left( Q\right) ,s-4M^{2}r^{2}\right)
\end{equation*}
in $D_{T}\cap \Psi _{\frac{r}{4aM}}\left( Q,s\right) .$ Observe that 
\begin{equation}
B_{d}\left( Q,\frac{r}{4aM}\right) \cap \Omega \subset \Omega \setminus \cup
_{i=1}^{N}B_{d}\left( Q_{i},2r\right) .  \label{LC3}
\end{equation}
By Theorem \ref{Dahlberg-elliptic}, (\ref{LC3}) gives
\begin{equation*}
\omega ^{\left( x,t\right) }\left( \Delta _{r}\left( Q_{i},s_{j}\right)
\right) \leq CG\left\vert B_{d}\left( x,r\right) \right\vert G\left(
x,t;A_{2a^{2}r}\left( Q_{i}\right) ,s_{j}-4a^{2}r^{2}\right)
\end{equation*}
 and the fact that $t>s-\frac{r^{2}}{16\left(aM\right)^{2}}.$ Harnack
inequality and proposition \ref{UseChain} implies 
\begin{equation*}
G\left( x,t;A_{2a^{2}r}\left( Q_{i}\right) ,s_{j}-4a^{2}r^{2}\right) \leq CG\left(
x,t;A_{r}\left( Q\right) ,s-4M^{2}r^{2}\right) .
\end{equation*}
\end{proof}
\qed

The following global comparison theorem is a consequence of Theorems (\ref{EllipticHar}) and (\ref{localcomp}), see \cite{FGS} for details.

\begin{theorem}
Let  $u,v$ be two nonnegative solutions of $Lu=0$ in $D_{+}$ which continuously vanish on $S_{+}$. Then for $0<\delta<\frac{1}{2a}\min\left(r_{o},\sqrt{T}\right)$ there exists a positive constant $C=C\left(X,\mathnormal{diam}\Omega,T,m,\delta\right)$ such that
\begin{displaymath}
v\left(x_{o},T\right)u\left(x,t\right)\leq C u\left(x_{o},T\right)v\left(x,t\right)
\end{displaymath}  
for all $\left(x,t\right)\in \Omega\times \left(2\delta^{2},T-\delta^{2}\right)$, where $x_{o}\in \Omega$ is fixed.  
\end{theorem}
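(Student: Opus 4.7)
The plan is to patch together the local comparison theorem (Theorem \ref{localcomp}) on a neighborhood of the lateral boundary with the interior elliptic-type Harnack inequality (Theorem \ref{EllipticHar}), in the spirit of the parabolic argument of \cite{FGS}. Fix $x_o\in\Omega$ and let $d_o=\mathnormal{dist}(x_o,\partial\Omega)$. I first choose $r=r(\delta)>0$ small enough that $Mr\leq \tfrac{1}{2}\min(r_0,\delta)$ and $4M^2r^2\leq \delta^2/2$; this guarantees that Theorem \ref{localcomp} applies at every base point $(Q,s)\in S_T$ with $s\in[\delta^2,T-\delta^2]$, and that the time-shifted corkscrew points $\bar A_{Mr}(Q,s)$ and $\underline A_{Mr}(Q,s)$ have time coordinates in $[\delta^2/2,\,T-\delta^2/2]$ and Carnot--Carathéodory distance $\geq r$ from $\partial\Omega$.

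Using a standard Vitali covering of $\partial\Omega\times[\delta^2,T-\delta^2]$ and the doubling property (\ref{doubling}), I then select finitely many parabolic surface boxes $\{\Psi_{r/(4aM)}(Q_i,s_j)\}_{i\leq N_0,\,j\leq N_1}$ whose interiors exhaust the lateral strip
\begin{equation*}
\mathcal B\;=\;\bigl\{(x,t)\in\Omega\times(2\delta^2,T-\delta^2)\,:\,\mathnormal{dist}(x,\partial\Omega)<r/(4aM)\bigr\}.
\end{equation*}
On each $(x,t)\in\mathcal B\cap\Psi^D_{r/(4aM)}(Q_i,s_j)$, Theorem \ref{localcomp} gives
\begin{equation*}
\frac{u(x,t)}{v(x,t)}\;\leq\;C\,\frac{u(\bar A_{Mr}(Q_i,s_j))}{v(\underline A_{Mr}(Q_i,s_j))}.
\end{equation*}
By the choice of $r$ all the corkscrew points, together with $(x_o,T)$, lie in a common interior region $D_T^{\delta_*}$ with $\delta_*=\delta_*(\delta,d_o)>0$. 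Applying Theorem \ref{EllipticHar} separately to $u$ and $v$ on $D_T^{\delta_*}$ yields $u(\bar A_{Mr}(Q_i,s_j))\leq N\,u(x_o,T)$ and $v(x_o,T)\leq N\,v(\underline A_{Mr}(Q_i,s_j))$, so multiplying gives the desired inequality $u(x,t)/v(x,t)\leq CN^2\,u(x_o,T)/v(x_o,T)$ on $\mathcal B$.

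On the complementary interior region $(\Omega\times(2\delta^2,T-\delta^2))\setminus\mathcal B$, by construction $(x,t)$ also lies in some $D_T^{\delta_{**}}$ with $\delta_{**}$ depending only on $\delta$, $d_o$, and $r$. A second application of Theorem \ref{EllipticHar} to the nonnegative caloric functions $u$ and $v$ at the two points $(x,t)$ and $(x_o,T)$ directly gives $u(x,t)/v(x,t)\leq N^2\,u(x_o,T)/v(x_o,T)$, and taking the maximum of the constants from the two regions completes the proof.

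The main technical obstacle is threading the various parameters consistently: $r$ must be small enough that (i) the local comparison theorem applies at every relevant base time $s_j$, (ii) both time-shifted corkscrew points $\bar A_{Mr}$ and $\underline A_{Mr}$ land in a single $D_T^{\delta_*}$ on which Theorem \ref{EllipticHar} is uniform, and (iii) the strip $\mathcal B$ is really covered by the boxes of half-size $r/(4aM)$; yet $r$ must not be so small that the resulting number $N_0\cdot N_1$ of boxes (and hence the final constant) fails to depend only on the allowed parameters. Once this bookkeeping is carried out, the rest is a mechanical combination of the two comparison results established in the preceding sections.
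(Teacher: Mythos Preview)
Your proposal is correct and is exactly the argument the paper has in mind (it gives no proof and simply cites \cite{FGS}, naming Theorems \ref{EllipticHar} and \ref{localcomp} as the two ingredients): local comparison on a collar of the lateral boundary, the interior elliptic-type Harnack inequality on the complement, and the latter once more to transport the corkscrew values to $(x_o,T)$. Your only unnecessary concern is the count $N_0\cdot N_1$ of covering boxes: since the constants in Theorems \ref{localcomp} and \ref{EllipticHar} are uniform over all admissible base points, the final constant is just $CN^2$ and does not see the covering at all---in fact no covering is needed, as for each $(x,t)\in\mathcal B$ you may simply apply Theorem \ref{localcomp} at the nearest $(Q,t)\in S_T$.
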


\begin{acknowledgements}
The author wishes to thank Prof. Nicola Garofalo for  his constant interest and the many helpful conversations on sub-elliptic and parabolic equations.
\end{acknowledgements}



\end{document}